\theoremstyle{plain}
\numberwithin{equation}{section}
\begin{document}
\title[Smoothness of orbital measures]{Smoothness of convolution products of
orbital measures on rank one compact symmetric spaces}
\author{Kathryn E. Hare}
\address{Dept. of Pure Mathematics\\
University of Waterloo\\
Waterloo, Ont.,~Canada\\
N2L 3G1}
\email{kehare@uwaterloo.ca}
\author{Jimmy He}
\address{Dept. of Pure Mathematics\\
University of Waterloo\\
Waterloo, Ont.,~Canada\\
N2L 3G1}
\email{jimmy.he@uwaterloo.ca}
\subjclass[2000]{Primary 43A80; Secondary 22E30, 53C35}
\keywords{rank one symmetric space, orbital measure, absolute continuity}
\thanks{This work was supported in part by NSERC Grant 2011-44597. }
%=======================================

%\citesort
\theoremstyle{cupthm}
\newtheorem{thm}{Theorem}[section]
\newtheorem{prop}[thm]{Proposition}
\newtheorem{cor}[thm]{Corollary}
\newtheorem{lemma}[thm]{Lemma}
\theoremstyle{cupdefn}
\newtheorem{defn}[thm]{Definition}
\theoremstyle{cuprem}
\newtheorem{rem}[thm]{Remark}

%\support{This work was supported in part by NSERC Grant 2011-44597.}

\begin{abstract}
We prove that all convolution products of pairs of continuous orbital
measures in rank one, compact symmetric spaces are absolutely continuous and
determine which convolution products are in $L^{2}$ (meaning, their density
function is in $L^{2})$. Characterizations of the pairs whose convolution product is either absolutely continuous or in $L^2$ are given in terms
of the dimensions of the corresponding double cosets. In particular, we
prove that if $G/K$ is not $SU(2)/SO(2),$ then the convolution of any two
regular orbital measures is in $L^{2}$, while in $SU(2)/SO(2)$ there are no
pairs of orbital measures whose convolution product is in $L^{2}$.

\end{abstract}

%\classification{primary 43A80; secondary 22E30, 53C35} 
%keywords{rank one symmetric space, spherical functions, orbital measure, absolute continuity}

\maketitle

\section{Introduction}

Let $G/K$ be an irreducible, simple, simply connected, compact symmetric
space. By an orbital measure, $\mu _{z}$, we mean the $K$-bi-invariant,
singular measure on $G$ supported on the double coset $KzK$. In this note we
prove that in any rank one symmetric space the convolution product of two
orbital measures, $\mu _{z_{1}}\ast \mu _{z_{2}},$ is absolutely continuous
if and only if 
\begin{equation}
\dim Kz_{1}K+\dim Kz_{2}K\geq \dim G/K  \label{criteria}
\end{equation}
if and only if $\mu _{z_{1}}$ and $\mu _{z_{2}}$ are both continuous. For
short, we write $\mu _{z_{1}}\ast \mu _{z_{2}}\in $ $L^{1}(G)$ because being
absolutely continuous is equivalent to the density function belonging to 
$L^{1}$. Furthermore, we prove that $\mu _{z_{1}}\ast \mu _{z_{2}}\in L^{2}(G)$ if and only
if the inequality \eqref{criteria} is strict. We show that only four of the
infinitely many rank one symmetric spaces admit any pair of continuous
orbital measures whose convolution is not in $L^{2}$.

It was previously shown in \cite{AGP} that there are continuous orbital
measures in the rank one symmetric space $SU(2)/SO(2)$ whose convolution is in $L^1$, but
not in $L^{2}$. This came as a surprise because in the special case that the
symmetric space is $(H\times H)/H\sim H$ for a compact Lie group $H$, it is
known that $\mu _{z}^{p}\in L^{1}(H)$ if and only if $\mu _{z}^{p}\in
L^{2}(H)$ for all integers $p$, the exponent $p$ here meaning the $p$-fold
convolution product \cite{GHAdv}.  One consequence of our characterization is that it
follows that there are no pairs of orbital measures for $SU(2)/SO(2)$ whose
convolution is in $L^{2}$.

The continuous orbital measures on $SU(2)/SO(2)$ are all examples of what
are called `regular' orbital measures. (For the definition, see section 2.)
Previously, it was shown that in any symmetric space the convolution of two
regular orbital measures is in $L^{1}$ \cite{GHBAMS}. Here we see that in
any rank one symmetric space, the convolution of any two continuous orbital
measures is in $L^{1}$ and if $G/K$ is any rank one symmetric space other
than $SU(2)/SO(2)$, then the convolution of any two regular orbital measures
is in $L^{2}(G)$. We also prove that in any rank one symmetric space, the
product of any three continuous orbital measures belongs to $L^{2}$.
Previously it was known that such a $3$-fold product was in $L^{1}$ 
\cite{GHJMAA}, with the sharper $L^{2}$ result known only for $SU(2)/SO(2)$ 
\cite{AGP}.

The problem of establishing the absolute continuity of convolution products
of orbital measures was originally studied by Ragozin in \cite{Ra}.
Extensive treatment of the absolute continuity problem in the non-compact
case has been carried out by Graczyk and Sawyer, c.f. \cite{GSJFA}, 
\cite{GSLie}.

\section{Absolutely continuous convolution products}

\subsection{Notation and Terminology}

If $G$ is a compact group and $K$ a compact, connected subgroup fixed by an
involution $\theta $, then $G/K$ is called a compact symmetric space. We
will assume $G/K$ is an irreducible, simple, simply connected, compact
symmetric space of Cartan type I. Our primary interest are those of rank
one; see the appendix for a complete list. We let $\mathfrak{g}=\mathfrak{k}
+i\mathfrak{p}$ be the Cartan decomposition of $\mathfrak{g}$, the Lie
algebra of $G$, let $\mathfrak{a}$ denote a maximal abelian subalgebra of 
$\mathfrak{p}$ and assume $\mathfrak{t}$ is a torus of $\mathfrak{g}$ that
contains $\mathfrak{a}$. Then $K=exp(\mathfrak{k})$ and if we let $A=exp(i
\mathfrak{a}),$ we have $G=KAK$. Hence every double coset, $KzK$, contains
an element $z$ in $A$.

We denote by $\Sigma ^{+}$ the set of positive roots of $(\mathfrak{g,t})$
and let 
\begin{equation*}
\Phi ^{+}=\{\alpha =\beta |_{\mathfrak{a}}:\beta \in \Sigma ^{+},\beta |_{
\mathfrak{a}}\neq 0\}
\end{equation*}
be the set of (positive) \textit{restricted roots}. When $G/K$ is rank one,
there is either one positive restricted root, $\alpha $, or there are two, 
$\alpha$ and $2\alpha $. We write $m_{\beta }$ for the multiplicity of the
restricted root $\beta $, that is, the dimension of the restricted root
space $\mathfrak{g}_{\beta }$. We remark that in the rank one spaces the
dimension of $G/K=\dim \mathfrak{p}=m_{\alpha }+m_{2\alpha }+1$ and
it is always the case that $m_{\alpha }\geq 1+m_{2\alpha }$. For the
convenience of the reader, we list important facts about these spaces and
their restricted root systems in the appendix. Further information about
these spaces can be found in \cite{Bu}, \cite{He1}, \cite{He2}, for example.

By an \textit{orbital measure} on the compact symmetric space $G/K$, we mean
the probability measure denoted by $\mu _{z},$ for $z\in G$, defined by
\begin{equation*}
\int_{G}fd\mu _{z}=\int_{K}\int_{K}f(k_{1}zk_{2})dm_{K}(k_{1})dm_{K}(k_{2})
\end{equation*}
for all continuous functions $f$ on $G$. The orbital measure is $K$-bi-invariant and it is singular because it is supported on the double coset 
$KzK,$ a set of Haar measure zero. Since every double coset contains an
element of $A$, there is no loss of generality in assuming $z\in A$. The
measure $\mu _{z}$ is continuous (i.e., non-atomic) if and only if $z\notin N_{G}(K)$, the
normalizer of $K$ in $G$.

It was shown in \cite{GHJMAA}, that if $r=$ rank $G/K$ and $\mu _{x_{j}}$
are continuous for $j=1,\dots,2r+1$, then $\mu _{x_{1}}\ast \cdots
\ast \mu _{x_{2r+1}}$ is absolutely continuous with respect to Haar measure,
meaning its density function (or Radon-Nikodym derivative) is in $L^{1}(G)$.
In particular, the convolution product of any three continuous orbital
measures, on any rank one symmetric space, has density function in $L^{1}$. This
improved upon much earlier work of Ragozin \cite{Ra} who had shown that any
product of $\dim G/K,$ continuous, orbital measures is absolutely continuous.
If, instead, $x\in N_{G}(K)$, then $\mu _{x}^{p}$ (the $p$-fold convolution
of $\mu _{x}$) is singular with respect to Haar measure for all $p$ since in
this case $\mu _{x}^{p}$ is supported on the subset $(KxK)^{p}=x^{p}K,$ a
set of Haar measure zero.

Given $z\in A$, say $z=e^{iZ}$ for $Z\in \mathfrak{a},$ we let 
\begin{equation*}
\Phi _{z}=\{\alpha \in \Phi ^{+}:\alpha (Z)=0\text{ mod }\pi \}
\end{equation*}
be the set of \textit{annihilating roots} of $z$. The annihilating roots
are very important in questions about orbital measures and double cosets. For instance, the dimension of 
$KzK$ equals $\sum_{\beta \in \Phi ^{+}\backslash \Phi _{Z}}m_{\beta }$. It is
known that $\Phi _{z}=\Phi ^{+}$ if and only if $z\in N_{G}(K)$. In
particular, if $z\notin N_{G}(K)$, then $\dim KzK\geq m_{\alpha }$.

We call $z$ \textit{regular} if $\Phi _{z}$ is empty and then we will also
call $\mu _{z}$ regular. In this case $\dim KzK=m_{\alpha }+m_{2\alpha }$.
If $G/K$ has only one restricted root, then every $z\in A$ is either regular
or belongs to $N_{G}(K)$, equivalently, $\mu _{z}$ is either regular or not
continuous. This is the situation with $SU(2)/SO(2)$, for example. If a rank
one symmetric space $G/K$ has two positive restricted roots, then a
continuous orbital measure $\mu _{z}$ is not regular if and only if $2\alpha
(Z) \equiv 0$ mod$\pi $, but $\alpha (Z)\neq 0$ mod$\pi ,$ and then $\dim
KzK=m_{\alpha }$.

\subsection{Absolute continuity of convolution products}

In \cite{GHBAMS} it was shown that if $z_{1},z_{2}$ are both regular, then 
$\mu _{z_{1}}\ast \mu _{z_{2}}$ is absolutely continuous. Similar arguments
show the same conclusion is true if $z_{1}$ is regular and $z_{2}\notin
N_{G}(K)$ or vice versa. Our first result is to prove that the same
conclusion holds for the convolution of any two continuous orbital measures
in a rank one symmetric space.

\begin{thm}
\label{ac}
If $G/K$ is a rank one, symmetric space, then $\mu _{z_{1}}\ast \mu _{z_{2}}$
is absolutely continuous if $z_{1},z_{2}\notin N_{G}(K)$.
\end{thm}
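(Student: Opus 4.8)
The plan is to reduce the absolute-continuity question to an estimate on the Fourier transform side, or alternatively to a direct geometric argument about the support of the convolution, and then to exploit the rank one structure. Since in a rank one space the relevant dimensions are extremely constrained — $\dim G/K = m_\alpha + m_{2\alpha} + 1$, and a continuous $\mu_z$ has $\dim KzK \in \{m_\alpha, m_\alpha + m_{2\alpha}\}$ with $m_\alpha \geq 1 + m_{2\alpha}$ — the inequality $\dim Kz_1K + \dim Kz_2K \geq \dim G/K$ is automatic once both measures are continuous: in the worst case each double coset has dimension exactly $m_\alpha$, and $2m_\alpha \geq m_\alpha + m_{2\alpha} + 1$ because $m_\alpha \geq 1 + m_{2\alpha}$. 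So the content is genuinely an absolute-continuity statement, not a dimension count, and \eqref{criteria} is being verified as a consequence rather than used as a hypothesis.

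First I would set up coordinates using $G = KAK$: writing $z_j = e^{iZ_j}$ with $Z_j \in \mathfrak a$, the convolution $\mu_{z_1} \ast \mu_{z_2}$ is supported on $K e^{iZ_1} K e^{iZ_2} K$, and its smoothness is governed by the map $(k,k') \mapsto k e^{iZ_1} k' e^{iZ_2}$ from $K \times K$ into $G$ (after projecting out the final $K$ on the right, one works on $G/K$ or with the radial part). The standard technique, going back to Ragozin and used in \cite{GHBAMS} for the regular case, is to show that this parametrizing map is a submersion onto an open set on a set of full measure — equivalently, that the differential has full rank $\dim G/K$ at generic $(k,k')$ — and then absolute continuity follows because a submersion pushes a smooth measure to a smooth measure. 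The derivative computation reduces to showing that the sum of the two $\mathrm{Ad}$-translated subspaces $\mathrm{Ad}(k)(\text{tangent data of } Kz_1K) + \mathrm{Ad}(k')(\text{tangent data of } Kz_2K)$, together with $\mathfrak a$, spans $i\mathfrak p$ for generic $k,k'$. The tangent space to $Kz_jK$ at $z_j$ is spanned by the restricted root spaces $\mathfrak g_\beta$ for $\beta \notin \Phi_{z_j}$, so in the worst continuous case each contributes (a rotated copy of) $\mathfrak g_\alpha$, which has dimension $m_\alpha$.

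The key linear-algebra fact I would prove is: in a rank one restricted root system, for generic $k \in K$, the subspace $\mathrm{Ad}(k)\mathfrak g_\alpha + \mathfrak g_\alpha + \mathfrak a$ (or with one more generic rotation) already fills up $i\mathfrak p$. This should follow from irreducibility of the $K$-action on $i\mathfrak p$ (true in the rank one case) combined with the numerical inequality $2m_\alpha + 1 \geq \dim i\mathfrak p = m_\alpha + m_{2\alpha} + 1$: two generic copies of an $m_\alpha$-dimensional subspace inside an irreducible $K$-module of dimension $m_\alpha + m_{2\alpha} + 1 \leq 2m_\alpha$ must, for generic rotations, intersect in the smallest possible dimension and hence span. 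One has to be slightly careful when $z_j$ is regular (then the $j$-th double coset contributes $\mathfrak g_\alpha \oplus \mathfrak g_{2\alpha}$, which only helps), so the tight case is when both measures are non-regular; that is the case $\dim Kz_jK = m_\alpha$ for both $j$, and it is exactly there that $m_\alpha \geq 1 + m_{2\alpha}$ is needed and used.

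The main obstacle I anticipate is making the word "generic" precise and showing the submersion property holds off a \emph{measure-zero} set rather than merely on a nonempty open set — one needs the set of bad $(k,k')$ to be closed of measure zero so that the push-forward of Haar measure restricted to the good set is all of $\mu_{z_1} \ast \mu_{z_2}$ up to a null set. A clean way around this is to argue that the rank of the differential is lower semicontinuous and $K \times K$-equivariant, so the locus where it is maximal is open, dense, and conull, and it suffices to exhibit \emph{one} pair $(k,k')$ where the differential is surjective; this single explicit computation in the rank one root system — choosing $k$ to be a suitable Weyl-group-type element or a one-parameter rotation mixing $\mathfrak g_\alpha$ and $\mathfrak g_{2\alpha}$ — is where the real work lies, but it is a finite check given the short list of rank one spaces in the appendix. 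Once surjectivity at a point is in hand, the absolute continuity of $\mu_{z_1} \ast \mu_{z_2}$ follows by the usual coarea/implicit-function-theorem argument.
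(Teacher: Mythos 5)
Your overall strategy (reduce to a tangent-space spanning condition and then treat the worst case where both measures are non-regular) is the same as the paper's, which invokes the criterion of \cite{GHJMAA}: $\mu_{z_1}\ast\mu_{z_2}$ is absolutely continuous if and only if $\mathfrak{p}=sp\{\mathcal{N}_{z_1},Ad(k)(\mathcal{N}_{z_2})\}$ for some single $k\in K$; that criterion already disposes of your worry about the bad set being merely non-open rather than null. But two points in your reduction are off. First, $\mathfrak{a}$ is not available ``for free'': the correct condition requires the two rotated tangent spaces of the double cosets \emph{alone} to span all of $\mathfrak{p}$, including the $\mathfrak{a}$-direction (this is exactly why absolute continuity forces $\dim Kz_1K+\dim Kz_2K\geq\dim G/K$ with no $+1$ discount, as in Corollary \ref{Cor:abscont}). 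In $SU(2)/SO(2)$, for instance, the whole content of the theorem is producing a nonzero $\mathfrak{a}$-component of $Ad(k)E_\alpha^-$; with $\mathfrak{a}$ added for free your criterion would be vacuously satisfied there, which signals that the reduction is too weak.

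Second, and more seriously, the key spanning fact is not proved. In the hard case both $\mathcal{N}_{z_j}$ equal $\mathfrak{g}_\alpha^-=sp\{E-\theta E:E\in\mathfrak{g}_\alpha\}$, of dimension $m_\alpha$, and one must find $k$ with $\mathfrak{g}_\alpha^-+Ad(k)\mathfrak{g}_\alpha^-=\mathfrak{p}$; in the spaces with $m_\alpha=m_{2\alpha}+1$ (types $AIII$ and $CII$ with $q=2$, and $FII$) this means the two copies must be exactly complementary. Your justification is the principle that two generic rotates of such subspaces inside an irreducible $K$-module intersect minimally and hence span; that is not a theorem. Lower semicontinuity of $k\mapsto\dim(V+Ad(k)W)$ only says the maximal value is attained on an open set, and irreducibility only gives $\sum_{k}Ad(k)W=\mathfrak{p}$, not that a single rotate is in general position with $V$. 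Your fallback, an explicit computation deferred to ``a finite check over the classification,'' is precisely the missing work, and it is where the paper's proof lives: it uses the Heisenberg-type structure of rank one spaces from \cite{CDKR} (for each $Z\in\mathfrak{g}_{2\alpha}$ there is $J_Z\in\mathfrak{g}_\alpha$ with $[E_\alpha,J_Z]=cZ$) to show that $ad(E_\alpha^+)$ carries $\mathfrak{g}_\alpha^-$ onto the missing $\mathfrak{g}_{2\alpha}^-$ directions modulo $\mathfrak{a}$, uses $[E_\alpha^+,E_\alpha^-]$ as a nonzero element of $\mathfrak{a}$, and then takes $k=\exp(tE_\alpha^+)$ for small $t>0$. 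Without some such structural input (or the case-by-case verification you postpone), the proposal does not yet prove the theorem.
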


\begin{proof}
We will write $E_{\beta }$ for any restricted root vector in $\mathfrak{g}
_{\beta }$. To simplify notation, we will write $E_{\beta }^{-}$ for 
$E_{\beta }-\theta E_{\beta }$ and $E_{\beta }^{+}$ for $E_{\beta }+\theta
E_{\beta }$. Note that $E_{\beta }^{+}\in \mathfrak{k}$ and $E_{\beta
}^{-}\in \mathfrak{p}$.

Given $z\in A$, let 
\begin{equation*}
\mathcal{N}_{z}=sp\{E_{\beta }^{-}:\text{restricted root }\beta \notin \Phi
_{z}\}\subseteq \mathfrak{p}
\end{equation*}
where $sp$ denotes the real span. It was shown in \cite{GHJMAA} that $\mu
_{z_{1}}\ast \mu _{z_{2}}$ is absolutely continuous if and only if there is
some $k\in K$ such that 
\begin{equation}
\mathfrak{p}=sp\{\mathcal{N}_{z_{1}},Ad(k)(\mathcal{N}_{z_{2}})\}.
\label{abscont}
\end{equation}

As remarked above, the result is already known if, in addition, either 
$z_{1} $ or $z_{2}$ is regular. So assume otherwise. In particular, we can
assume $G/K$ has two positive restricted roots, $\Phi _{z_{1}}=\Phi
_{z_{2}}=\{2\alpha \}$ and $\mathcal{N}_{z_{1}}=\mathcal{N}
_{z_{2}}=sp\{E_{\alpha }^{(j)-}$: $E_{\alpha }^{(j)}$ is a basis for 
$\mathfrak{g}_{\alpha }\}$. Put $E_{\alpha }^{(1)}=E_{\alpha }$ and let 
$k_{t}=\exp tE_{\alpha }^{+}\in K$ for small $t>0$.

Standard facts about root vectors and the Lie bracket implies that 
\begin{align*}
[E_{\alpha }^{+},E_{\alpha }^{(j)-}] &=[E_{\alpha },E_{\alpha
}^{(j)}]-\theta [E_{\alpha },E_{\alpha }^{(j)}]+[\theta E_{\alpha
},E_{\alpha }^{(j)}]-\theta [\theta E_{\alpha },E_{\alpha }^{(j)}] \\
&=[E_{\alpha },E_{\alpha }^{(j)}]-\theta [E_{\alpha },E_{\alpha
}^{(j)}]+H_{j},
\end{align*}
where $H_{j}\in \mathfrak{a}$.

Since $[g_{\alpha },g_{2\alpha }]=0=[g_{2\alpha },g_{2\alpha }]$ and 
$[g_{\alpha },g_{\alpha }]\subseteq g_{2\alpha }$, it follows from \cite{CDKR}
that there is a scalar $c>0$ such that for every $Z\in \mathfrak{g}_{2\alpha
}$ there is some $J_{Z}=J_{Z}(E_{\alpha })\in \mathfrak{g}_{\alpha }$ with 
\begin{equation*}
[E_{\alpha },J_{Z}]=cZ.
\end{equation*}
Temporarily fix $Z=E_{2\alpha }^{(j)}$, assume $J_{Z}=\sum d_{i}E_{\alpha
}^{(i)}$. With $J_{Z}^{-}=J_{Z}-\theta J_{Z}$, the observations above imply
\begin{equation*}
[E_{\alpha }^{+},J_{Z}^{-}]=\sum d_{i}\left( [E_{\alpha },E_{\alpha
}^{(i)}]-\theta [E_{\alpha },E_{\alpha }^{(i)}]\right) +H_{Z}
\end{equation*}
for some $H_{Z}\in \mathfrak{a}$. Thus 
\begin{equation*}
[E_{\alpha }^{+},J_{Z}^{-}]=[E_{\alpha },J_{Z}]-\theta [E_{\alpha },
J_{Z}]+H_{Z}=c(Z-\theta Z)+H_{Z}=cE_{2\alpha }^{(j)-}+H_{Z}.
\end{equation*}
Since $E_{\alpha }^{-}$, $J_{Z}^{-}\in \mathcal{N}_{z_{2}},$ and $[E_{\alpha
}^{+},E_{\alpha }^{-}]$ is a non-zero element (and hence generator) of 
$\mathfrak{a}$, it follows that 
\begin{equation*}
sp\{\mathcal{N}_{z_{1}},ad(E_{\alpha }^{+})(\mathcal{N}_{z_{2}})\}=\mathfrak{
p}\text{.}
\end{equation*}

As $\exp tE_{\alpha }^{+}=Id+t\cdot ad(E_{\alpha }^{+})$ +$P_{t}$ for some
operator $P_{t}$ with norm $O(t^{2})$, it follows that for small enough 
$t>0$, $sp\{\mathcal{N}_{z_{1}},Ad(k_{t})(\mathcal{N}_{z_{2}})\}=\mathfrak{p}$.
(We refer the reader to \cite{GHBAMS} for the details of a similar
argument.) This completes the proof.
\end{proof}

\begin{rem}
\label{Rem}
Observe that $\dim KzK=\dim \mathcal{N}_{z}$.
\end{rem}

\begin{cor}
\label{Cor:abscont}
For a rank one symmetric space $G/K$, the following are equivalent:
\begin{enumerate}
\item $\mu _{z_{1}}\ast \mu _{z_{2}}$ is absolutely continuous;
\item  $\mu _{z_{1}}$ and $\mu _{z_{2}}$ are continuous measures;
\item $\dim Kz_{1}K+\dim Kz_{2}K\geq G/K$.
%\item $Kz_1Kz_2K$ has non-empty interior.
\end{enumerate}
\end{cor}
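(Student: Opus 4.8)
The plan is to prove the three-way equivalence by establishing the cycle (1) $\Rightarrow$ (2) $\Rightarrow$ (3) $\Rightarrow$ (1), drawing on Theorem \ref{ac}, Remark \ref{Rem}, and the facts about annihilating roots and dimensions of double cosets collected in the Notation subsection. The implications (1) $\Rightarrow$ (2) and (3) $\Rightarrow$ (1) should be quick; the implication (2) $\Rightarrow$ (3) is where the rank-one structure theory does the real work.

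For (1) $\Rightarrow$ (2): if, say, $\mu_{z_1}$ is not continuous, then $z_1 \in N_G(K)$, so $(Kz_1K)(Kz_2K) = z_1 K z_2 K$ is contained in a single double coset $K(z_1 z_2)K$, which has Haar measure zero; hence $\mu_{z_1}\ast\mu_{z_2}$ is supported on a null set and cannot be absolutely continuous. (Equivalently, one invokes the criterion \eqref{abscont}: if $z_1 \in N_G(K)$ then $\Phi_{z_1} = \Phi^+$ and $\mathcal N_{z_1} = \{0\}$, while $\mathrm{Ad}(k)(\mathcal N_{z_2})$ has dimension at most $m_\alpha + m_{2\alpha} < \dim\mathfrak p$, so \eqref{abscont} fails for every $k$.) The implication (3) $\Rightarrow$ (1) I would handle by contraposition together with Theorem \ref{ac}: if $\mu_{z_1}\ast\mu_{z_2}$ is not absolutely continuous, then by Theorem \ref{ac} at least one of $z_1, z_2$ lies in $N_G(K)$, so one of $\dim Kz_iK$ equals $\sum_{\beta\in\Phi^+\setminus\Phi^+} m_\beta = 0$, while the other is at most $m_\alpha + m_{2\alpha} = \dim G/K - 1$; hence the sum in \eqref{criteria} is at most $\dim G/K - 1 < \dim G/K$, so (3) fails.

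The substantive step is (2) $\Rightarrow$ (3). Assume both $\mu_{z_1},\mu_{z_2}$ are continuous, i.e. $z_1, z_2 \notin N_G(K)$, so $\Phi_{z_i} \subsetneq \Phi^+$ for each $i$. If $G/K$ has a single restricted root $\alpha$, continuity forces $z_i$ to be regular, so $\dim Kz_iK = m_\alpha$ and the sum is $2m_\alpha$, which is at least $m_\alpha + 1 = \dim G/K$ precisely because $m_\alpha \geq 1$ (indeed $m_\alpha \ge 1 + m_{2\alpha} = 1$ here). If $G/K$ has two restricted roots $\alpha, 2\alpha$, then for each $i$ either $z_i$ is regular, giving $\dim Kz_iK = m_\alpha + m_{2\alpha}$, or $z_i$ is continuous but not regular, giving $\dim Kz_iK = m_\alpha$. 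In the worst case both are non-regular, and the sum is $2m_\alpha$; since $\dim G/K = m_\alpha + m_{2\alpha} + 1$ and $m_\alpha \geq 1 + m_{2\alpha}$, we get $2m_\alpha \geq m_\alpha + m_{2\alpha} + 1 = \dim G/K$. In the other cases the sum is only larger, so \eqref{criteria} holds in all cases. I expect the only mild subtlety to be bookkeeping the three cases for the number of restricted roots and the regular/non-regular dichotomy, and invoking $m_\alpha \ge 1 + m_{2\alpha}$ at exactly the right place; no deep input beyond the facts already recorded (dimension of $KzK$ as a sum of multiplicities over $\Phi^+\setminus\Phi_z$, the characterization $z\in N_G(K) \iff \Phi_z = \Phi^+$, and Theorem \ref{ac}) is needed.
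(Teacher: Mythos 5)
Your argument is correct, but it is organized differently from the paper's. The paper proves the cycle (2) $\Rightarrow$ (1) $\Rightarrow$ (3) $\Rightarrow$ (2): it gets (2) $\Rightarrow$ (1) from Theorem \ref{ac}, deduces (1) $\Rightarrow$ (3) by contraposition from the span criterion \eqref{abscont} together with Remark \ref{Rem} (the span of $\mathcal{N}_{z_1}$ and $Ad(k)(\mathcal{N}_{z_2})$ has dimension at most $\dim Kz_1K+\dim Kz_2K$, so if this is less than $\dim\mathfrak{p}$ no $k$ can work), and then (3) $\Rightarrow$ (2) is immediate because a non-continuous $\mu_{z_j}$ has $\dim Kz_jK=0$ while the other coset has dimension at most $\dim G/K-1$. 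You instead run the cycle (1) $\Rightarrow$ (2) $\Rightarrow$ (3) $\Rightarrow$ (1): your (1) $\Rightarrow$ (2) uses the elementary observation that $z_1\in N_G(K)$ forces $Kz_1Kz_2K=Kz_1z_2K$, a null double coset carrying the convolution (which lets you avoid the ``absolute continuity implies spanning'' direction of \eqref{abscont} altogether, though your parenthetical alternative uses it exactly as the paper does); your (3) $\Rightarrow$ (1) repackages Theorem \ref{ac} and the dimension bound in contrapositive form; and your (2) $\Rightarrow$ (3) is genuinely new relative to the paper's proof, establishing inequality \eqref{criteria} directly from the regular/non-regular dichotomy and the structural inequality $m_\alpha\geq 1+m_{2\alpha}$, which the paper records but never needs for this corollary. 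The trade-off: the paper's route makes the dimension inequality a formal consequence of the tangent-space criterion with no case analysis, while yours is slightly more hands-on (and a touch more elementary in the (1) $\Rightarrow$ (2) step) at the cost of invoking the rank-one multiplicity inequality. Both are complete proofs of the equivalence.
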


\begin{proof}
Theorem \ref{ac} gives that (2) implies (1) since $\mu_z$ is continuous if and only if $z \notin N_{G}(K)$.

Since $\dim sp\{\mathcal{N}_{z_{1}},Ad(k)(\mathcal{N}_{z_{2}})\} \le \dim \mathcal{N}_{z_{1}} + \dim \mathcal{N}_{z_{2}}$,
 it is immediate from \eqref{abscont} and Remark \ref{Rem} that if
 $\dim
Kz_{1}K+\dim Kz_{2}K<\dim \mathfrak{p}=\dim G/K,$ then $\mu _{z_{1}}\ast \mu
_{z_{2}}$ is not absolutely continuous. Thus (1) implies (3).

 Lastly, we observe that if, say, $\mu_{z_1}$ is not continuous, then  $\dim Kz_{1}K=0$. Thus $\dim Kz_{1}K+\dim Kz_{2}K <\dim G/K$, so (3) implies (2).
% and hence $\mu _{z_{1}}\ast \mu _{z_{2}}$ is not
%absolutely continuous.

%The Theorem gives that (2) implies (1) since $\mu_z$ is continuous if and only if $z \notin N_{G}(K)$.
%The equivalence of (1) and (4) follows from \cite{Ra}.

%On the other hand, if $\dim Kz_{1}K+\dim Kz_{2}K\geq \dim G/K$, then, as 
%$\dim KzK<\dim G/K$ we must have $\dim Kz_{j}K>0$ for both $j$, and therefore 
%$z_{1},z_{2}\notin N_{G}(K)$. Thus $\mu _{z_{1}}\ast \mu _{z_{2}}$ is
%absolutely continuous.
\end{proof}

\begin{rem}
It follows from\/ {\rm \cite{Ra}} that the absolute continuity of   $\mu _{z_{1}}\ast \mu _{z_{2}}$ is also equivalent to $Kz_{1}Kz_{2}K$ having non-empty interior.
%whenever $\mu _{z_{1}}\ast \mu _{z_{2}}$ is absolutely continuous.
\end{rem}

\section{Convolution products that are in $L^{2}$}

In the remainder of this paper, we study when the convolution product of
orbital measures belongs to the smaller space $L^{2}(G)$. We will do this by
estimating the decay in the Fourier transform of orbital measures. For this,
we introduce further notation.

\textbf{Notation:} An irreducible, unitary representation $(\pi ,V_{\pi })$
of $G$ is called \textit{spherical} if there exists a $K$-invariant vector
in $V_{\pi }$. It is known that the dimension of the $K$-invariant subspace, 
$V_{\pi }^{K}$, is one (c.f., \cite{AGP}). We will let $X_{1}=X_{\pi
},X_{2},\dots,X_{\dim V_{\pi }}$ be an orthonormal basis for $V_{\pi },$ where
we suppose $V_{\pi }^{K}$ is spanned by $X_{\pi }$.

The following facts can essentially be found in \cite{AGP} (and are valid in
any compact symmetric space, not just those of rank one).

\begin{lemma}
For any $x,y\in G$ we have $\left\langle \widehat{\mu _{x}}(\pi )\widehat{
\mu _{y}}(\pi )X_{i},X_{j}\right\rangle =0$ if $(i,j)\neq (1,1)$ and
\begin{equation*}
\left\langle \widehat{\mu _{x}}(\pi )\widehat{\mu _{y}}(\pi )X_{\pi },X_{\pi
}\right\rangle =\left\langle \pi (x^{-1})X_{\pi },X_{\pi }\right\rangle
\left\langle \pi (y^{-1})X_{\pi },X_{\pi }\right\rangle .
\end{equation*}
\end{lemma}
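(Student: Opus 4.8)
The plan is to compute the Fourier transform $\widehat{\mu_x}(\pi)$ explicitly using the $K$-bi-invariance of the orbital measure and the one-dimensionality of $V_\pi^K$. First I would recall that for any probability measure $\mu$ on $G$, $\widehat{\mu}(\pi) = \int_G \pi(g^{-1})\, d\mu(g)$ (with whatever convention the paper uses; the argument is insensitive to it). Applying this to $\mu_x$ and using its definition as an average over $k_1 x k_2$ with $k_1, k_2 \in K$, one gets
\begin{equation*}
\widehat{\mu_x}(\pi) = \int_K \int_K \pi(k_2^{-1}) \pi(x^{-1}) \pi(k_1^{-1})\, dm_K(k_1)\, dm_K(k_2) = P \, \pi(x^{-1})\, P,
\end{equation*}
where $P = \int_K \pi(k)\, dm_K(k)$ is the orthogonal projection of $V_\pi$ onto the $K$-invariant subspace $V_\pi^K$. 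Since $\dim V_\pi^K = 1$ and $X_\pi$ spans it, $P = \langle \,\cdot\,, X_\pi\rangle X_\pi$, so $\widehat{\mu_x}(\pi)$ is the rank-one operator $v \mapsto \langle \pi(x^{-1}) X_\pi, v_?\rangle\cdots$; more precisely $\widehat{\mu_x}(\pi) v = \langle v, X_\pi\rangle \, \langle \pi(x^{-1}) X_\pi, X_\pi\rangle\, X_\pi$. Thus $\widehat{\mu_x}(\pi) = c_x\, P$ with scalar $c_x = \langle \pi(x^{-1}) X_\pi, X_\pi\rangle$.

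Given this, the product $\widehat{\mu_x}(\pi)\widehat{\mu_y}(\pi) = c_x c_y\, P^2 = c_x c_y\, P$, since $P$ is a projection. Now I would read off the matrix entries in the orthonormal basis $X_1 = X_\pi, X_2, \dots, X_{\dim V_\pi}$: because $P$ annihilates $X_j$ for $j \geq 2$ and fixes $X_\pi$, we get $\langle \widehat{\mu_x}(\pi)\widehat{\mu_y}(\pi) X_i, X_j\rangle = c_x c_y\, \langle P X_i, X_j\rangle$, which vanishes unless $i = j = 1$, and equals $c_x c_y = \langle \pi(x^{-1})X_\pi, X_\pi\rangle \langle \pi(y^{-1})X_\pi, X_\pi\rangle$ when $(i,j) = (1,1)$. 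That is exactly the claimed formula.

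The only genuine content is the identification $P = \int_K \pi(k)\, dm_K(k)$ as the orthogonal projection onto $V_\pi^K$ (a standard averaging/Peter–Weyl fact) together with $\dim V_\pi^K = 1$, which is cited from \cite{AGP}; everything else is bookkeeping with Fourier transform conventions and the idempotency $P^2 = P$. I do not anticipate a real obstacle — the main thing to be careful about is keeping the placement of inverses and the left/right factors consistent with the convention for $\widehat{\mu}(\pi)$ and for the convolution $\widehat{\mu_x \ast \mu_y} = \widehat{\mu_x}\,\widehat{\mu_y}$ (or its reverse), but since $P$ appears symmetrically on both sides of $\pi(x^{-1})$, the rank-one conclusion is robust to these choices.
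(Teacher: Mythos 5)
Your proof is correct and follows essentially the same route as the paper: both rest on the fact that $\widehat{\mu_x}(\pi)$ is the scalar $\langle \pi(x^{-1})X_\pi,X_\pi\rangle$ times the rank-one projection onto the one-dimensional space $V_\pi^K$, after which the matrix entries of the product are immediate. The only difference is that the paper simply cites \cite{AGP} for these facts about $\widehat{\mu_x}(\pi)$, whereas you derive them directly from $\widehat{\mu_x}(\pi)=P\,\pi(x^{-1})\,P$ with $P=\int_K\pi(k)\,dm_K(k)$, which is a perfectly valid (and self-contained) substitute.
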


\begin{proof}
It is shown in \cite{AGP} that for all $i$, $\widehat{\mu _{x}}(\pi
)X_{i}\in V_{\pi }^{K}=spX_{\pi }$, $\widehat{\mu _{x}}(\pi )X_{i}=0$ for
all $i\neq 1$ and $\left\langle \widehat{\mu _{x}}(\pi )X_{\pi },X_{\pi
}\right\rangle =\left\langle \pi (x^{-1})X_{\pi },X_{\pi }\right\rangle $.
\end{proof}

\begin{prop}
\label{propFT}For all $x,y\in G$,
\begin{align*}
\left\Vert \mu _{x}\ast \mu _{y}\right\Vert _{2}^{2} &=\left\Vert \widehat{
\mu _{x}\ast \mu _{y}}\right\Vert _{2}^{2} \\
&=\sum_{\pi \text{ \rm{spherical}}}\mathrm{\dim }V_{\pi }|\langle \pi
(x)X_{\pi },X_{\pi }\rangle \left\langle \pi (y)X_{\pi },X_{\pi
}\right\rangle |^{2}.
\end{align*}
\end{prop}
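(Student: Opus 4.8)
The plan is to compute $\|\mu_x \ast \mu_y\|_2^2$ via the Plancherel theorem on the compact group $G$, reducing the $L^2$-norm to a sum over the unitary dual of the Hilbert--Schmidt norms of the Fourier coefficients, and then to use the preceding Lemma to evaluate those Hilbert--Schmidt norms explicitly. First I would invoke the Plancherel formula: for $f \in L^2(G)$,
\begin{equation*}
\|f\|_2^2 = \sum_{\pi \in \widehat{G}} \dim V_\pi \, \|\widehat{f}(\pi)\|_{HS}^2,
\end{equation*}
which applies to $f = \mu_x \ast \mu_y$ precisely when $\mu_x \ast \mu_y$ is absolutely continuous with $L^2$ density (and gives $+\infty$ on both sides otherwise, so the identity still makes sense in $[0,\infty]$). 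Since $\widehat{\mu_x \ast \mu_y}(\pi) = \widehat{\mu_x}(\pi)\widehat{\mu_y}(\pi)$, the first equality $\|\mu_x \ast \mu_y\|_2^2 = \|\widehat{\mu_x \ast \mu_y}\|_2^2$ is just Plancherel, and it remains to compute each $\|\widehat{\mu_x}(\pi)\widehat{\mu_y}(\pi)\|_{HS}^2$.

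Next I would expand the Hilbert--Schmidt norm in the orthonormal basis $X_1, \dots, X_{\dim V_\pi}$ of $V_\pi$:
\begin{equation*}
\|\widehat{\mu_x}(\pi)\widehat{\mu_y}(\pi)\|_{HS}^2 = \sum_{i,j} |\langle \widehat{\mu_x}(\pi)\widehat{\mu_y}(\pi) X_i, X_j\rangle|^2.
\end{equation*}
By the Lemma, every term with $(i,j) \neq (1,1)$ vanishes, so only the $(1,1)$ entry survives, and again by the Lemma that entry equals $\langle \pi(x^{-1})X_\pi, X_\pi\rangle \langle \pi(y^{-1})X_\pi, X_\pi\rangle$. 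Taking absolute values squared and observing that $|\langle \pi(x^{-1})X_\pi, X_\pi\rangle| = |\langle X_\pi, \pi(x)X_\pi\rangle| = |\langle \pi(x)X_\pi, X_\pi\rangle|$ (unitarity plus conjugate symmetry), this gives
\begin{equation*}
\|\widehat{\mu_x}(\pi)\widehat{\mu_y}(\pi)\|_{HS}^2 = |\langle \pi(x)X_\pi, X_\pi\rangle|^2\, |\langle \pi(y)X_\pi, X_\pi\rangle|^2.
\end{equation*}
Finally, a nonspherical $\pi$ has $V_\pi^K = \{0\}$, so $\widehat{\mu_x}(\pi) = 0$ (the orbital measure is $K$-bi-invariant, hence its Fourier transform factors through the projection onto $V_\pi^K$), and such terms contribute nothing; thus the sum over $\widehat{G}$ collapses to a sum over spherical $\pi$, yielding the stated formula.

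The only genuine subtlety is justifying the use of Plancherel when $\mu_x \ast \mu_y$ may fail to lie in $L^2$: the cleanest route is to note that $\widehat{\mu_x}, \widehat{\mu_y}$ are uniformly bounded operators (as Fourier transforms of probability measures), so the partial sums $\sum_{\dim V_\pi \le N} \dim V_\pi \|\widehat{\mu_x}(\pi)\widehat{\mu_y}(\pi)\|_{HS}^2$ are nondecreasing, and by monotone convergence together with the abstract Plancherel/Parseval identity for measures their limit equals $\|\mu_x \ast \mu_y\|_2^2$ with the convention that this is $+\infty$ exactly when the density is not in $L^2$. I expect this bookkeeping to be the main (minor) obstacle; the algebraic heart of the argument is entirely supplied by the Lemma.
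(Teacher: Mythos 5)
Your proposal is correct and follows essentially the same route as the paper: Peter--Weyl/Plancherel reduces $\left\Vert \mu_x \ast \mu_y \right\Vert_2^2$ to a sum of Hilbert--Schmidt norms, which the preceding Lemma collapses to the single $(1,1)$ matrix entry. Your extra remarks on nonspherical representations and the $+\infty$ convention are sensible bookkeeping that the paper leaves implicit, but they do not change the argument.
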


\begin{proof}
By the Peter-Weyl theorem, 
\begin{equation*}
\left\Vert \mu _{x}\ast \mu _{y}\right\Vert _{2}^{2}=\sum_{\pi \text{
spherical}}\mathrm{\dim }V_{\pi }\left\Vert \widehat{\mu _{x}\ast \mu _{y}}
(\pi )\right\Vert _{HS}^{2}.
\end{equation*}
The previous lemma and orthogonality gives 

\begin{align*}
\left\Vert \widehat{\mu _{x}\ast \mu _{y}}(\pi )\right\Vert _{HS}^{2}
&=\sum_{i=1}^{\dim V_{\pi }}\left\Vert \widehat{\mu _{x}\ast \mu _{y}}(\pi
)X_{i}\right\Vert ^{2}=\sum_{i}\sum_{j}\left\vert \left\langle \widehat{\mu
_{x}}(\pi )\widehat{\mu _{y}}(\pi )X_{i},X_{j}\right\rangle \right\vert ^{2}
\\
&=\left\vert \left\langle \widehat{\mu _{x}}(\pi )\widehat{\mu _{y}}(\pi
)X_{\pi },X_{\pi }\right\rangle \right\vert ^{2}=|\langle \pi (x)X_{\pi
},X_{\pi }\rangle \left\langle \pi (y)X_{\pi },X_{\pi }\right\rangle |^{2}.
\end{align*}

\end{proof}

We will let 
\begin{equation*}
\phi _{\pi }(x)=\langle \pi (x)X_{\pi },X_{\pi }\rangle .
\end{equation*}
These are called \textit{spherical functions} and have been well studied
(c.f., \cite[Ch. IV, V]{He}, \cite[Ch III]{He2}), particularly in the rank
one case which we will assume for the remainder of this section. The
following result is critical for us.

\begin{thm}
{\rm \cite[Ch.~V, Thm.~4.5]{He}} Let $G/K$ be a simply connected, compact
symmetric space of rank one and let $\beta $ denote the larger element in 
$\Phi ^{+}$. Let $\pi $ be a spherical representation of $G$ and let $\lambda 
$ denote the restriction of the highest weight of $\pi $ to $\mathfrak{a}$.
Then $\lambda =n\beta $ where $n$ is a positive integer. The spherical
function, $\phi _{\pi },$ is given by the hypergeometric function,
\begin{equation*}
\phi _{\pi }(x)=_{2}F_{1}\left( \frac{1}{2}m_{\beta /2}+m_{\beta }+n,-n;
\frac{1}{2}(m_{\beta /2}+m_{\beta }+1);\sin ^{2}(\beta (X)/2)\right)
\end{equation*}
where $x=\exp iX$, $X\in \mathfrak{a}$. Moreover, there is such a spherical
representation for each positive integer $n$.
\end{thm}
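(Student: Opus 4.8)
The statement is Helgason's classical evaluation of the rank one spherical functions, and I would prove it by realizing $\phi _{\pi }$ as the unique normalized radial eigenfunction of the Laplace--Beltrami operator and then matching the resulting second order ordinary differential equation with Gauss's hypergeometric equation. \emph{Radial reduction.} Since $G/K$ has rank one, $\mathfrak{a}=\mathbb{R}H$ is one dimensional, so every $K$-bi-invariant function on $G$, and in particular the spherical function $\phi _{\pi }$, is a function of the single real parameter $t$ defined by $X=tH$, $x=\exp iX$. I would reparametrize by $u=\sin ^{2}(\beta (X)/2)$, which runs over $[0,1]$ on the compact space; smoothness of the matrix coefficient $\phi _{\pi }$ on all of $G/K$ forces $u\mapsto \phi _{\pi }$ to be smooth on $[0,1]$, with value $1$ at $u=0$, the identity coset. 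First I would record, via the $KAK$ decomposition and the Weyl integration formula on $G/K$, that the density of Haar measure in the variable $t$ is a constant multiple of $|\sin (\beta (X)/2)|^{m_{\beta /2}}\,|\sin (\beta (X))|^{m_{\beta }}$, with the convention $m_{\beta /2}=0$ when $\beta /2\notin \Phi ^{+}$.

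\emph{Reduction to the hypergeometric equation.} The radial part of $\Delta $ is then the Jacobi (Sturm--Liouville) operator $L$ attached to that weight, and $\phi _{\pi }$ solves $L\phi _{\pi }=-c(\pi )\phi _{\pi }$, where $c(\pi )=\langle \lambda ,\lambda +2\rho \rangle $ is the Casimir eigenvalue and $\rho =\tfrac{1}{2}(\tfrac{m_{\beta /2}}{2}+m_{\beta })\beta $ is half the sum of the positive restricted roots counted with multiplicity. Passing to $u=\sin ^{2}(\beta (X)/2)$ converts this into Gauss's equation
\[
u(1-u)\frac{d^{2}\phi }{du^{2}}+\bigl(c-(a+b+1)u\bigr)\frac{d\phi }{du}-ab\,\phi =0,
\]
where the exponents of the weight at $u=0$ give $c=\tfrac{1}{2}(m_{\beta /2}+m_{\beta }+1)$ and $a+b=\tfrac{m_{\beta /2}}{2}+m_{\beta }$, while comparing the constant term with the normalized eigenvalue and writing $\lambda =n\beta $ gives $ab=-n(\tfrac{1}{2}m_{\beta /2}+m_{\beta }+n)$. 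The solution regular at $u=0$ with value $1$ there is exactly ${}_{2}F_{1}(a,b;c;u)$ with $a=\tfrac{1}{2}m_{\beta /2}+m_{\beta }+n$ and $b=-n$, which is the asserted formula.

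\emph{The parameter $n$.} That $\lambda $ must equal $n\beta $ for a non-negative integer $n$ (the case $n=0$ being the trivial representation), and that there is precisely one spherical representation for each such $n$, is the Cartan--Helgason theorem: the restricted highest weights of spherical representations are exactly the dominant weights in the cone which, in rank one, is generated by a single element proportional to $\beta $, and consecutive ones differ by $\beta $. One could alternatively see existence by noting that in rank one the algebra of $K$-bi-invariant functions on $G$ is a polynomial ring on one generator, whose Peter--Weyl decomposition yields one spherical representation in each degree. With $b=-n$ a non-positive integer the series ${}_{2}F_{1}(a,-n;c;u)$ terminates, so $\phi _{\pi }$ is a polynomial of degree $n$ in $u$ and is therefore automatically smooth on the whole compact space, including at the singular orbit $u=1$; a non-integral choice of $n$ would instead produce a hypergeometric solution with a branch point or logarithmic singularity at $u=1$, incompatible with $\phi _{\pi }$ being a smooth matrix coefficient, which is the analytic explanation of the quantization.

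\emph{Main obstacle.} The delicate points are computing the radial part of $\Delta $ with the correct coefficients---in particular seeing that the root $\beta /2$ enters through $\cot (\beta (X)/2)$ and contributes $\tfrac{1}{2}m_{\beta /2}$ rather than $m_{\beta /2}$, because $\beta /2$ has half the length of $\beta $---and making rigorous the assertion that smoothness on the compact manifold forces the hypergeometric series to terminate; the latter ultimately rests on the Cartan--Helgason parametrization of spherical weights rather than on ODE theory alone.
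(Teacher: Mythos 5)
This theorem is quoted in the paper directly from Helgason \cite[Ch.~V, Thm.~4.5]{He} with no proof given, and your sketch reconstructs essentially the argument of that cited source: reduce the spherical function to the radial eigenfunction equation for the Laplace--Beltrami operator, change variables to $u=\sin^{2}(\beta(X)/2)$ to obtain Gauss's hypergeometric equation with the stated parameters, and invoke the Cartan--Helgason theorem for the quantization $\lambda=n\beta$ and the existence of one spherical representation per $n$. Your parameter bookkeeping ($c=\tfrac{1}{2}(m_{\beta/2}+m_{\beta}+1)$, $a+b=\tfrac{1}{2}m_{\beta/2}+m_{\beta}$, $ab=-n(\tfrac{1}{2}m_{\beta/2}+m_{\beta}+n)$) checks out, so the proposal is correct and in line with the reference the paper relies on.
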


By $\beta $ the \textquotedblleft larger element\textquotedblright , we mean 
$\beta =2\alpha $ if there are two restricted roots and $\beta =\alpha $
otherwise. Here $m_{\beta /2}$ should be understood as $0$ if $\Phi ^{+}$
has only one element.

Using the symmetry of the first two arguments of the hypergeometric function
and the relationship between the hypergeometric functions and the Jacobi
polynomials $P_{n}^{(a,b)}(x)$, namely, 
\begin{equation*}
\frac{\Gamma (n+1)\Gamma (a+1)}{\Gamma (a+n+1)}
P_{n}^{(a,b)}(x)=_{2}F_{1}(-n,n+a+b+1,a+1;\frac{1-x}{2}),
\end{equation*}
(c.f., ~\cite{OLBC}), we obtain the following expression for the spherical
functions:

\begin{prop}
\label{main}Let $\pi _{n}$ be the spherical representation of $G$ with
highest weight restricted to $\mathfrak{a}$ equal to $n\beta $. Assume 
$z=e^{iZ}$ with $Z\in \mathfrak{a}$. Then
\begin{equation*}
\phi _{\pi _{n}}(z)=\frac{\Gamma (n+1)\Gamma (a+1)}{\Gamma (a+n+1)}
P_{n}^{(a,b)}(\cos \beta (Z))
\end{equation*}
where 
\begin{equation*}
a=\frac{1}{2}(m_{\beta /2}+m_{\beta }-1)\text{, }b=\frac{1}{2}(m_{\beta }-1).
\text{ }
\end{equation*}
\end{prop}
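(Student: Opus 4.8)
The plan is to derive Proposition~\ref{main} directly from the hypergeometric expression for $\phi_{\pi_n}$ in the theorem of Helgason quoted above, combined with the stated identity relating Gauss hypergeometric functions to Jacobi polynomials. No new representation-theoretic input is needed: the argument is a bookkeeping of parameters.

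First I would write $z=e^{iZ}$ with $Z\in\mathfrak{a}$ and use the symmetry ${}_2F_1(p,q;r;w)={}_2F_1(q,p;r;w)$ in the two numerator parameters of Helgason's formula so that the terminating parameter $-n$ comes first, giving the expression the shape ${}_2F_1(-n,q;r;w)$ of the right-hand side of the Jacobi identity. Comparing, I would set $a+1=\frac{1}{2}(m_{\beta/2}+m_\beta+1)$, which forces $a=\frac{1}{2}(m_{\beta/2}+m_\beta-1)$, and then solve $n+a+b+1=\frac{1}{2}m_{\beta/2}+m_\beta+n$ for $b$, which forces $b=\frac{1}{2}(m_\beta-1)$; these are exactly the values in the statement. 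Finally the half-angle identity $\sin^2(\theta/2)=(1-\cos\theta)/2$ identifies the argument $\sin^2(\beta(Z)/2)$ with $(1-x)/2$ for $x=\cos\beta(Z)$, so the Jacobi identity applies verbatim and produces
\[
\phi_{\pi_n}(z)=\frac{\Gamma(n+1)\Gamma(a+1)}{\Gamma(a+n+1)}P_n^{(a,b)}(\cos\beta(Z)),
\]
as claimed.

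Since $-n$ is a non-positive integer the hypergeometric series terminates, so there is no convergence issue and every step is algebraic; I do not expect a genuine obstacle. The only points requiring care are the convention that $m_{\beta/2}=0$ when $\Phi^+$ has a single element (so that $a=b=\frac{1}{2}(m_\beta-1)$ in that case, consistent with the formula) and keeping track of the normalizing constant $\Gamma(n+1)\Gamma(a+1)/\Gamma(a+n+1)$. A convenient sanity check is to evaluate at $z=e$: there $\phi_{\pi_n}(e)=1$ while $P_n^{(a,b)}(1)=\Gamma(a+n+1)/(\Gamma(n+1)\Gamma(a+1))$, so the constant is forced to be precisely the one appearing above, confirming the parameter identification.
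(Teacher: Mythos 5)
Your proposal is correct and is essentially the same argument the paper uses: swap the first two parameters of Helgason's ${}_2F_1$ expression, match $a+1=\frac{1}{2}(m_{\beta/2}+m_\beta+1)$ and $n+a+b+1=\frac{1}{2}m_{\beta/2}+m_\beta+n$, and identify $\sin^2(\beta(Z)/2)$ with $(1-\cos\beta(Z))/2$ in the Jacobi polynomial identity. The check at the identity element is a nice extra confirmation of the normalizing constant but is not needed.
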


For the remainder of the paper, $\pi _{n}$ will denote the spherical
representation of $G$ with highest weight restricted to $\mathfrak{a}$ equal to 
$n\beta $ where $\beta =2\alpha $ if there are two positive restricted roots
and $\beta =\alpha $ otherwise.

The asymptotic dimension formula for the spherical representations $\pi _{n}$
can be derived using the Weyl dimension formula. Complicated explicit
formulas are also known, see \cite{GG}.

\begin{prop}
\label{propdim} There are constants $c_{1},c_{2}>0$ such that for any $n$
the spherical representation $\pi _{n}$ has dimension bounded by 
\begin{equation*}
c_{1}n^{m_{\alpha }+m_{2\alpha }}\leq \mathrm{\dim }V_{\pi _{n}}\leq
c_{2}n^{m_{\alpha }+m_{2\alpha }}.
\end{equation*}
\end{prop}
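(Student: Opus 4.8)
The plan is to derive the dimension bounds from the Weyl dimension formula applied to the spherical representation $\pi_n$. Recall that if $\pi_n$ has highest weight $\Lambda_n$ (a dominant weight of $(\mathfrak{g},\mathfrak{t})$ whose restriction to $\mathfrak{a}$ is $n\beta$), then
\begin{equation*}
\dim V_{\pi_n}=\prod_{\gamma\in\Sigma^+}\frac{\langle \Lambda_n+\rho,\gamma\rangle}{\langle \rho,\gamma\rangle},
\end{equation*}
where $\rho$ is the half-sum of positive roots. First I would note that for spherical representations the highest weight $\Lambda_n$ is forced to lie in the one-dimensional lattice spanned by a fundamental spherical weight, so $\Lambda_n = n\Lambda_1$ for a fixed dominant weight $\Lambda_1$; equivalently $\Lambda_n$ grows linearly in $n$ along a fixed ray in $\mathfrak{t}^*$. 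Consequently each factor $\langle \Lambda_n+\rho,\gamma\rangle/\langle\rho,\gamma\rangle$ is an affine function of $n$ with positive leading coefficient, unless $\gamma$ is orthogonal to $\Lambda_1$, in which case the factor is identically $1$.

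The key step is then a counting argument: the degree in $n$ of $\dim V_{\pi_n}$ equals the number of positive roots $\gamma\in\Sigma^+$ that are not orthogonal to $\Lambda_1$. Since $\Lambda_1$ restricted to $\mathfrak{a}$ is a positive multiple of $\beta$, and since $\Lambda_1$ is $W_K$-invariant up to the relevant considerations, $\langle\Lambda_1,\gamma\rangle=0$ precisely when the restriction $\gamma|_{\mathfrak{a}}=0$, i.e. when $\gamma$ is one of the roots in $\Sigma^+$ that do not contribute to a restricted root. The positive roots that do restrict nontrivially to $\mathfrak{a}$ are, counted with multiplicity, exactly $m_\alpha$ copies associated to $\alpha$ and $m_{2\alpha}$ copies associated to $2\alpha$ (recall $\dim\mathfrak{g}_\beta=m_\beta$), giving a total of $m_\alpha+m_{2\alpha}$. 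Hence $\dim V_{\pi_n}$ is a polynomial in $n$ of degree exactly $m_\alpha+m_{2\alpha}$ with positive leading coefficient, which yields the two-sided bound $c_1 n^{m_\alpha+m_{2\alpha}}\le \dim V_{\pi_n}\le c_2 n^{m_\alpha+m_{2\alpha}}$ for suitable constants and all $n\ge 1$; the rank one list in the appendix can be invoked to confirm there are no degenerate cases.

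The main obstacle I expect is the bookkeeping that identifies the degree of the Weyl product with $m_\alpha+m_{2\alpha}$: one must verify carefully that a positive root $\gamma$ contributes a nonconstant factor if and only if $\gamma|_{\mathfrak{a}}\neq 0$, which requires knowing that the pairing $\langle\Lambda_n+\rho,\gamma\rangle$ depends on $\gamma$ only through $\gamma|_{\mathfrak{a}}$ in the relevant sense (true because $\Lambda_n+\rho$ can be taken $\theta$-fixed, or because $\Lambda_n$ lies in $\mathfrak{a}^*$ and $\rho$'s component orthogonal to $\mathfrak{a}$ pairs to a constant). An alternative, if one prefers to avoid this, is simply to cite the explicit dimension formulas for rank one spherical representations in \cite{GG} and read off the leading-order behaviour case by case from the appendix; either route gives the stated estimate, so this is genuinely routine and the statement follows.
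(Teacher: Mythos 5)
Your proposal is correct and follows essentially the same route as the paper: apply the Weyl dimension formula, use the fact (from Helgason, Ch.~V, Thm.~4.1) that the spherical highest weight vanishes on $\mathfrak{t}\cap\mathfrak{k}$ so that $\langle\lambda_n,\gamma\rangle=n\langle\beta,\gamma|_{\mathfrak{a}}\rangle$, and count the positive roots with nonzero restriction to $\mathfrak{a}$, which number $m_{\alpha}+m_{2\alpha}$. The paper closes the one point you flag as needing care exactly as you suggest, using that $\mathfrak{a}$ is one-dimensional so $\gamma|_{\mathfrak{a}}=c\beta$ and the factor is nonconstant precisely when $\gamma|_{\mathfrak{a}}\neq 0$.
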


\begin{proof}
Suppose $\pi _{n}$ has highest weight $\lambda _{n}$ where $\lambda _{n}|_{
\mathfrak{a}}=n\beta $ with $\beta $ the largest restricted root. As shown
in \cite[Ch. V, Thm. 4.1]{He}, $\lambda _{n}$ vanishes on $\mathfrak{t\cap k}
$, thus 
\begin{equation*}
\langle \lambda _{n},\gamma \rangle =n\langle \beta ,\gamma |_{\mathfrak{a}
}\rangle .
\end{equation*}
It follows that the Weyl dimension formula implies 
\begin{equation*}
\mathrm{\dim }V_{\pi _{n}}=\prod_{\gamma \in \Sigma ^{+}}\frac{\langle
\lambda _{n}+\rho ,\gamma \rangle }{\langle \rho ,\gamma \rangle }
=\prod_{\gamma \in \Sigma ^{+}}\left( \frac{n\langle \beta ,\gamma |_{
\mathfrak{a}}\rangle }{\langle \rho ,\gamma \rangle }+1\right) .
\end{equation*}
This expression is polynomial in $n$, with each factor being either linear
or 1 depending on whether $\langle \beta ,\gamma |_{\mathfrak{a}}\rangle $
is non-zero. As $\mathfrak{a}$ is one dimensional, $\gamma |_{\mathfrak{a}
}=c\beta $ for some scalar $c$ and thus $n\langle \beta ,\gamma |_{\mathfrak{
a}}\rangle \neq 0$ if and only if $\gamma |_{\mathfrak{a}}\neq 0$. Hence the
degree of the polynomial is the number of $\gamma \in \Sigma ^{+}$ with 
$\gamma |_{\mathfrak{a}}\neq 0$, namely, $m_{\alpha }+m_{2\alpha }$.
\end{proof}

We next recall the well known asymptotic estimate for the Jacobi polynomials
which can be found in ~\cite{Sz}, for example. With these we can easily
obtain asymptotic estimates on the size of the spherical functions.

\begin{lemma}
Let $a,b\in \mathbb{R}$. Then 
\begin{equation*}
P_{n}^{(a,b)}(-1)=\binom{n+b}{n}(-1)^{n}\text{ and }P_{n}^{(a,b)}(1)=\binom{
n+a}{n}\text{,}
\end{equation*}
while if $\theta \in (0,\pi )$, 
\begin{equation*}
P_{n}^{(a,b)}(cos\theta )=k(\theta )n^{-\frac{1}{2}}cos(N\theta +\gamma
)+O(n^{-\frac{3}{2}})
\end{equation*}
where 
\begin{equation*}
N=n+\frac{a+b+1}{2}\text{, }\gamma =-\frac{\pi }{2}(a+\frac{1}{2})
\end{equation*}
and 
\begin{equation*}
k(\theta )=\pi ^{-\frac{1}{2}}(sin(\frac{\theta }{2}))^{-a-\frac{1}{2}}(cos(
\frac{\theta }{2}))^{-b-\frac{1}{2}}>0.
\end{equation*}
\end{lemma}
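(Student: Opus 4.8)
The plan is to treat the three assertions separately; the first two are elementary closed-form evaluations and the third is the classical Darboux asymptotic formula for Jacobi polynomials, so the proof is in essence a reference to \cite{Sz} organized along the lines below. For the endpoint values I would start from the hypergeometric representation of the Jacobi polynomials already used in Proposition \ref{main}, namely $P_{n}^{(a,b)}(x)=\binom{n+a}{n}\,{}_2F_1\!\left(-n,\,n+a+b+1;\,a+1;\,\frac{1-x}{2}\right)$, where $\binom{n+a}{n}=\frac{\Gamma(n+a+1)}{\Gamma(a+1)\Gamma(n+1)}$ is the generalized binomial coefficient. Setting $x=1$ makes the argument $\frac{1-x}{2}$ of the hypergeometric series vanish, so all terms but the constant term die and ${}_2F_1(\cdots;0)=1$, giving $P_{n}^{(a,b)}(1)=\binom{n+a}{n}$. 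For $x=-1$ I would invoke the reflection identity $P_{n}^{(a,b)}(-x)=(-1)^{n}P_{n}^{(b,a)}(x)$ (immediate from the Rodrigues formula, or from the hypergeometric form after an Euler transformation), which reduces the value at $-1$ to the value at $+1$: $P_{n}^{(a,b)}(-1)=(-1)^{n}P_{n}^{(b,a)}(1)=(-1)^{n}\binom{n+b}{n}$.

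For the interior asymptotic I would run Darboux's method on the generating function $\sum_{n\ge 0}P_{n}^{(a,b)}(x)\,w^{n}=2^{a+b}R^{-1}(1-w+R)^{-a}(1+w+R)^{-b}$, where $R=(1-2xw+w^{2})^{1/2}$. Writing $x=\cos\theta$ with $\theta\in(0,\pi)$, this function is analytic in $|w|<1$ and its only singularities on the unit circle are the two conjugate algebraic branch points $w=e^{\pm i\theta}$, arising from $1-2xw+w^{2}=(w-e^{i\theta})(w-e^{-i\theta})$; the factors $(1\mp w+R)^{\mp}$ are analytic and nonzero there. Near $w=e^{i\theta}$ the leading singular part of the generating function is a computable constant times $(1-we^{-i\theta})^{-1/2}$, whose $n$-th Taylor coefficient is $\binom{n-1/2}{n}e^{-in\theta}\sim(\pi n)^{-1/2}e^{-in\theta}$; adding the contribution of the conjugate branch point $e^{-i\theta}$ produces a real oscillatory term of order $n^{-1/2}$, and tracking the constants in the two local expansions yields exactly $k(\theta)\,n^{-1/2}\cos(N\theta+\gamma)$ with the stated $N$ and $\gamma$. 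The next-order term of each local expansion is less singular by one unit, so it contributes $O(n^{-3/2})$, uniformly for $\theta$ in compact subsets of $(0,\pi)$. (A Liouville--Green transformation of the Jacobi differential equation gives an alternative derivation of the same formula.)

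The only genuine work lies in the third part: matching amplitudes and phases at the two branch points $e^{\pm i\theta}$ to recover the precise $k(\theta)$, $N=n+\frac{a+b+1}{2}$ and $\gamma=-\frac{\pi}{2}(a+\frac12)$, and controlling the remainder uniformly in $\theta$ — delicate but routine bookkeeping. Since this is precisely Szeg\H{o}'s Theorem~8.21.8, with the two endpoint formulas being standard identities from Chapter~4 of the same reference, I would simply cite \cite{Sz} for the details rather than reproduce the computation.
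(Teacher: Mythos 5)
Your proposal is correct and coincides with what the paper does: the paper states this lemma without proof, simply citing Szeg\H{o} \cite{Sz}, and your endpoint evaluations, the reflection identity, and the Darboux-method sketch leading to Szeg\H{o}'s Theorem~8.21.8 are all standard and accurate. The only (harmless) looseness is calling the factors $(1-w+R)^{-a}$, $(1+w+R)^{-b}$ ``analytic'' at $w=e^{\pm i\theta}$ — they involve the branch of $R$ and are only continuous and nonvanishing there — but this does not affect the argument, whose details you correctly defer to \cite{Sz}.
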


\begin{cor}
\label{sizesph} {\rm (a)} If $z\in A$ is regular, then
\begin{equation*}
\left\vert \phi _{\pi _{n}}(z)\right\vert \leq Cn^{-\frac{1}{2}(m_{\alpha
}+m_{2\alpha })}.
\end{equation*}
{\rm (}Here $m_{2\alpha }$ should be understood as $0$ in the single root case.{\rm )}

{\rm (b)} If $z\in A\diagdown N_{G}(K),$ but is not regular, then there are
positive constants $C_{1},C_{2}$ such that 
\begin{equation*}
C_{1}n^{-\frac{1}{2}m_{\alpha }}\leq \left\vert \phi _{\pi
_{n}}(z)\right\vert \leq C_{2}n^{-\frac{1}{2}m_{\alpha }}.
\end{equation*}
\end{cor}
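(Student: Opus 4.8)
The plan is to combine Proposition \ref{main}, which expresses $\phi _{\pi _{n}}(z)$ as a constant multiple of a Jacobi polynomial evaluated at $\cos \beta (Z)$, with the asymptotics recorded in the preceding lemma. First I would compute the scalar prefactor $\Gamma (n+1)\Gamma (a+1)/\Gamma (a+n+1)$: by Stirling's formula this behaves like $\Gamma (a+1)\, n^{-a}$ up to a multiplicative constant, so the whole expression for $|\phi_{\pi_n}(z)|$ is of order $n^{-a}|P_n^{(a,b)}(\cos\beta(Z))|$. The value of $a$ depends on which case we are in: in the single root case $a = \tfrac12(m_\alpha - 1)$, and in the two-root case $a = \tfrac12(m_{2\alpha} + m_\alpha - 1)$ (recall $\beta = 2\alpha$ and $m_{\beta/2} = m_\alpha$).

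For part (a), $z$ regular means $\Phi_z$ is empty, so $\beta(Z) \not\equiv 0 \bmod \pi$ and likewise $\alpha(Z) \not\equiv 0 \bmod \pi$; hence $\cos\beta(Z) \in (-1,1)$, i.e. $\beta(Z)/2$ (or an appropriate angle) lies strictly inside $(0,\pi)$, and the oscillatory estimate $P_n^{(a,b)}(\cos\theta) = k(\theta) n^{-1/2}\cos(N\theta + \gamma) + O(n^{-3/2})$ applies. This gives $|P_n^{(a,b)}(\cos\beta(Z))| \le C n^{-1/2}$, so $|\phi_{\pi_n}(z)| \le C n^{-a - 1/2}$. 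Plugging in the value of $a$ yields $n^{-\frac12(m_\alpha + m_{2\alpha})}$ in the two-root case and $n^{-\frac12 m_\alpha}$ in the single-root case (where $m_{2\alpha}$ is read as $0$), which is exactly the claimed bound. Here I only need the upper bound, so the oscillation is harmless — I discard the $\cos$ factor by bounding it by $1$.

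For part (b), $z \in A \setminus N_G(K)$ but not regular forces the two-root case with $2\alpha(Z) \equiv 0 \bmod \pi$ and $\alpha(Z) \not\equiv 0 \bmod \pi$. Then $\beta(Z) = 2\alpha(Z) \equiv 0 \bmod \pi$, so $\cos\beta(Z) = \pm 1$. I would use the endpoint formulas $P_n^{(a,b)}(1) = \binom{n+a}{n}$ and $P_n^{(a,b)}(-1) = (-1)^n\binom{n+b}{n}$. Both binomial coefficients grow like a constant times $n^{a}$ and $n^{b}$ respectively — and crucially they are bounded above and below by constant multiples of those powers (for fixed real parameters), which is what delivers the two-sided estimate rather than just an upper bound. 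At the $+1$ endpoint we get $|\phi_{\pi_n}(z)| \asymp n^{-a} \cdot n^{a} = $ constant, which would be wrong, so the resolution must be that $\cos\beta(Z) = -1$ here: since $\alpha(Z)\not\equiv 0$ but $2\alpha(Z)\equiv 0 \bmod \pi$, we have $\alpha(Z) \equiv \tfrac{\pi}{2}$ or $\tfrac{3\pi}{2}$, hence $\beta(Z) = 2\alpha(Z) \equiv \pi \bmod 2\pi$ (not $0 \bmod 2\pi$), giving $\cos\beta(Z) = -1$. Then $|\phi_{\pi_n}(z)| \asymp n^{-a}\binom{n+b}{n} \asymp n^{-a+b} = n^{-\frac12(m_{2\alpha}+m_\alpha - 1) + \frac12(m_\alpha - 1)} = n^{-\frac12 m_{2\alpha}}$. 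This does not match $n^{-\frac12 m_\alpha}$ either, so I must be careful: the correct reading is that in this non-regular case the relevant $a,b$ from Proposition \ref{main} are computed with $m_{\beta/2} = m_\alpha$, $m_\beta = m_{2\alpha}$, and I should recheck whether the highest weight restricts to $n\beta$ with $\beta = 2\alpha$ or effectively reduces to the $\alpha$-root system. I expect the main obstacle to be exactly this bookkeeping — correctly identifying which endpoint $\cos\beta(Z)$ hits and which root multiplicities enter $a$ and $b$ in the non-regular case — after which the two-sided bound follows immediately from the stated binomial-coefficient identities and Stirling.
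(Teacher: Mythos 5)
Your overall approach is exactly the paper's: apply Proposition \ref{main}, Stirling for the prefactor $\Gamma(n+1)\Gamma(a+1)/\Gamma(a+n+1)\asymp n^{-a}$, the interior Jacobi asymptotic for part (a), and the endpoint values $P_n^{(a,b)}(\pm 1)$ for part (b). Part (a) is correct as written, and your identification in part (b) that $\alpha(Z)\equiv \pi/2 \bmod \pi$ forces $\cos\beta(Z)=-1$ (so the $-1$ endpoint, not $+1$, is the relevant one) is also exactly right and is the only geometric point that needs making.

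However, part (b) is left with a genuine gap: your exponent computation is wrong and you do not resolve it, instead deferring to ``bookkeeping'' you suspect might require revisiting which root system the highest weight lives over. The error is purely a substitution slip in $b$. In Proposition \ref{main}, $a=\tfrac12(m_{\beta/2}+m_\beta-1)$ and $b=\tfrac12(m_\beta-1)$ with $\beta$ the \emph{larger} restricted root, so in the two-root case $m_{\beta/2}=m_\alpha$ and $m_\beta=m_{2\alpha}$, giving $b=\tfrac12(m_{2\alpha}-1)$, not $\tfrac12(m_\alpha-1)$ as you used. With the correct $b$,
\begin{equation*}
|\phi_{\pi_n}(z)|\asymp n^{-a}\binom{n+b}{n}\asymp n^{-a+b}
= n^{-\frac12(m_\alpha+m_{2\alpha}-1)+\frac12(m_{2\alpha}-1)}=n^{-\frac12 m_\alpha},
\end{equation*}
which is precisely the claimed two-sided bound; no re-examination of Proposition \ref{main} or of the restricted root system is needed. (Your remark that the binomial coefficient and the Gamma ratio admit two-sided bounds of the stated orders is fine, since $b=\tfrac12(m_{2\alpha}-1)\ge 0$ here, so the lower bound is legitimate.) Fixing this one substitution closes the argument and makes it identical to the paper's proof.
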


\begin{proof}
These estimates follow directly from the previous result since the Gamma
function is known to satisfy 
\begin{equation*}
\frac{\Gamma (n+1)}{\Gamma (a+n+1)}=O(n^{-a})\text{ and }\binom{n+s}{n}
=O(n^{s})\text{ for }s>0.
\end{equation*}
\end{proof}

We are now ready to prove our main result. Note that throughout the proof $C$
will denote a constant that can vary from one line to the next.

\begin{thm}
\label{Th:main}Assume $G/K$ is a rank one, simple, simply connected,
compact, symmetric space that is not isomorphic to $SU(2)/SO(2)$. Assume 
$z_{1},z_{2}\in A\diagdown N_{G}(K).$

{\rm (a)} If either of $z_{1}$ or $z_{2}$ is regular, then $\mu _{z_{1}}\ast \mu
_{z_{2}}\in L^{2}(G)$.

{\rm (b)} If $G/K$ is not type $AIII$ or $CII$ with $q=2,$ or type $FII$, then 
$\mu _{z_{1}}\ast \mu _{z_{2}}\in L^{2}(G)$.

{\rm (c)} If $G/K$ is type $AIII$ or $CII$ with $q=2,$ or type $FII,$ and neither 
$z_{1}$ nor $z_{2}$ is regular, then $\mu _{z_{1}}\ast \mu _{z_{2}}\notin
L^{2}(G)$.
\end{thm}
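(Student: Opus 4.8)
The plan is to compute $\|\mu_{z_1}\ast\mu_{z_2}\|_2^2$ directly from Proposition~\ref{propFT}, which expresses this quantity as $\sum_{n\geq 1}\dim V_{\pi_n}\,|\phi_{\pi_n}(z_1)|^2|\phi_{\pi_n}(z_2)|^2$ (only the representations $\pi_n$ are spherical in the rank one case). By Proposition~\ref{propdim} the dimension grows like $n^{m_\alpha+m_{2\alpha}}$, so the series converges if and only if $|\phi_{\pi_n}(z_1)|^2|\phi_{\pi_n}(z_2)|^2$ decays faster than $n^{-1-m_\alpha-m_{2\alpha}}$. Thus everything reduces to the pointwise bounds on spherical functions from Corollary~\ref{sizesph}, combined with a case analysis on the regularity of $z_1,z_2$ and on the multiplicities $m_\alpha,m_{2\alpha}$ of the space in question.

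For part~(a), suppose $z_1$ is regular, so $|\phi_{\pi_n}(z_1)|\leq Cn^{-(m_\alpha+m_{2\alpha})/2}$, and use the trivial bound $|\phi_{\pi_n}(z_2)|\leq 1$. Then the general term is bounded by $C\,n^{m_\alpha+m_{2\alpha}}\cdot n^{-(m_\alpha+m_{2\alpha})}\cdot 1 = C\,n^{-(m_\alpha+m_{2\alpha})}$... this is not quite summable, so one must instead keep a nontrivial bound on $\phi_{\pi_n}(z_2)$: since $z_2\notin N_G(K)$, part~(b) of Corollary~\ref{sizesph} (or part~(a) if $z_2$ is also regular) gives $|\phi_{\pi_n}(z_2)|\leq Cn^{-m_\alpha/2}$ in every case. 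The general term is then $\leq C\,n^{m_\alpha+m_{2\alpha}-(m_\alpha+m_{2\alpha})-m_\alpha}=C\,n^{-m_\alpha}$, and since every rank one space other than $SU(2)/SO(2)$ has $m_\alpha\geq 2$, the series converges. For part~(b), if neither $z_i$ is regular we have $|\phi_{\pi_n}(z_i)|\leq Cn^{-m_\alpha/2}$ for $i=1,2$, so the general term is $\leq C\,n^{m_\alpha+m_{2\alpha}-2m_\alpha}=C\,n^{m_{2\alpha}-m_\alpha}$; this is summable precisely when $m_\alpha-m_{2\alpha}\geq 2$. Consulting the appendix table, the spaces with $m_\alpha-m_{2\alpha}=1$ are exactly type $AIII$ (and $CII$) with $q=2$ and type $FII$, and $m_\alpha-m_{2\alpha}=0$ does not occur among two-root spaces (recall $m_\alpha\geq 1+m_{2\alpha}$ always); combining with the mixed regular/non-regular case already handled in~(a), this yields $\mu_{z_1}\ast\mu_{z_2}\in L^2(G)$ in all remaining cases.

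For part~(c), I need the matching lower bound. For the excluded spaces with $m_\alpha-m_{2\alpha}=1$, when neither $z_i$ is regular we have from Corollary~\ref{sizesph}(b) the two-sided estimate $|\phi_{\pi_n}(z_i)|\geq C_1 n^{-m_\alpha/2}$, so the general term of the series is $\geq C\,n^{m_\alpha+m_{2\alpha}-2m_\alpha}=C\,n^{m_{2\alpha}-m_\alpha}=C\,n^{-1}$, whence $\sum_n\dim V_{\pi_n}|\phi_{\pi_n}(z_1)|^2|\phi_{\pi_n}(z_2)|^2$ diverges and $\mu_{z_1}\ast\mu_{z_2}\notin L^2(G)$. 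Here one must pair the lower bound on dimension from Proposition~\ref{propdim} with the lower bound on the spherical function; since both are genuine two-sided estimates this is clean.

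The main obstacle is really bookkeeping rather than a deep difficulty: one has to be careful in part~(a) not to throw away too much decay (the naive bound $|\phi_{\pi_n}(z_2)|\leq 1$ is insufficient and one must invoke that $z_2\notin N_G(K)$ forces $|\phi_{\pi_n}(z_2)|\leq Cn^{-m_\alpha/2}$), and one must verify against the appendix exactly which rank one spaces have $m_\alpha-m_{2\alpha}\leq 1$ — these turn out to be $AIII$/$CII$ with $q=2$ and $FII$ (together with $SU(2)/SO(2)$, which is excluded by hypothesis), accounting for the "only four" spaces mentioned in the introduction. I would also note in passing that the borderline arithmetic — $n^{-1}$ being non-summable but $n^{-1-\varepsilon}$ summable — is exactly why the dividing line falls between $m_\alpha-m_{2\alpha}=1$ and $m_\alpha-m_{2\alpha}\geq 2$.
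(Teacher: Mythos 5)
Your proposal is correct and follows essentially the same route as the paper: Proposition~\ref{propFT} plus the dimension bounds of Proposition~\ref{propdim} and the Jacobi-polynomial estimates of Corollary~\ref{sizesph}, with the same case analysis on regularity and the same appendix check that $m_{\alpha}-m_{2\alpha}\geq 2$ exactly outside the listed exceptional spaces; the only (harmless) difference is that in part (a) you use the uniform bound $|\phi _{\pi _{n}}(z_{2})|\leq Cn^{-m_{\alpha }/2}$ for any continuous $z_2$, where the paper splits into the regular/regular and regular/non-regular subcases. One small slip in your discarded aside: with the trivial bound $|\phi _{\pi _{n}}(z_{2})|\leq 1$ the general term is $O(1)$, not $O(n^{-(m_{\alpha }+m_{2\alpha })})$ (the latter would in fact be summable off $SU(2)/SO(2)$), but since you reject that route anyway the argument you actually run is correct.
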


\begin{rem}
The compact symmetric spaces of type $AIII$ or $CII$ with $q=2$ are those
isomorphic to $SU(3)/(S(U(2)\times U(1)))$ or $Sp(6)/(Sp(4)\times Sp(2)),$
and those of type $FII$ are $F_{4}/SO(9)$. The significance of these is that
they are the rank one spaces with two positive restricted roots that satisfy 
$m_{\alpha }=m_{2\alpha }+1.$
\end{rem}

\begin{proof}
(a) From Prop.~\ref{propFT} we have
\begin{equation*}
\left\Vert \mu _{z_{1}}\ast \mu _{z_{2}}\right\Vert _{2}^{2}=\sum_{n}\dim
V_{\pi _{n}}\left\vert \phi _{\pi _{n}}(z_{1})\phi _{\pi
_{n}}(z_{2})\right\vert ^{2}.
\end{equation*}
If both $z_{1}$ and $z_{2}$ are regular, then Cor.~\ref{sizesph}(i) gives 
\begin{equation*}
\left\vert \phi _{\pi _{n}}(z_{j})\right\vert \leq Cn^{^{-}\frac{1}{2}
(m_{\alpha }+m_{2\alpha })}
\end{equation*}
for both $j=1,2$. Combining this with the fact that $\dim V_{\pi _{n}}\leq
O(n^{m_{\alpha }+m_{2\alpha }})$ yields the bound 
\begin{equation*}
\left\Vert \mu _{z_{1}}\ast \mu _{z_{2}}\right\Vert _{2}^{2}\leq C\sum_{n}
\mathrm{\dim }V_{\pi _{n}}n^{-2(m_{\alpha }+m_{2\alpha })}\leq
C\sum_{n}n^{-(m_{\alpha }+m_{2\alpha })}\text{.}
\end{equation*}
For all rank one symmetric spaces other than $SU(2)/SO(2)$, $m_{\alpha
}+m_{2\alpha }\geq 2$ (see the appendix) and hence this sum converges. Thus 
$\mu _{z_{1}}\ast \mu _{z_{2}}\in L^{2}$.

Next, suppose $z_1$, but not $z_2$, is regular. Then Cor.~\ref{sizesph}(ii) gives $\left\vert
\phi _{\pi _{n}}(z_{2})\right\vert \leq Cn^{-m_{\alpha }/2}$ and similar arguments to the first case shows that
\begin{equation*}
\left\Vert \mu _{z_{1}}\ast \mu _{z_{2}}\right\Vert _{2}^{2}\leq C\sum_{n}
\mathrm{\dim }V_{\pi _{n}}n^{-(m_{\alpha }+m_{2\alpha })}n^{-m_{\alpha
}}\leq C\sum_{n}n^{-m_{\alpha }}.
\end{equation*}
This sum is finite since $m_{\alpha }\geq 2$ whenever there are two positive
restricted roots, as must be the case if there is such a element $z_{2}$.

(b) We can assume neither $z_{1},z_{2}$ are regular for otherwise we simply
apply (a). Arguing as above gives 
\begin{equation*}
\left\Vert \mu _{z_{1}}\ast \mu _{z_{2}}\right\Vert _{2}^{2}\leq
C\sum_{n}n^{m_{\alpha }+m_{2\alpha }}n^{-2m_{\alpha }}\leq
C\sum_{n}n^{-m_{\alpha }+m_{2\alpha }}.
\end{equation*}
But $m_{\alpha }-m_{2\alpha }\geq 2$ in all the rank one symmetric spaces
other than those we have listed.

(c) If neither $z_{1},z_{2}$ are regular, then the lower bounds of 
Prop.~\ref{propdim} and Cor.~\ref{sizesph}(b) give 
\begin{equation*}
\left\Vert \mu _{z_{1}}\ast \mu _{z_{2}}\right\Vert _{2}^{2}\geq C^{\prime
}\sum_{n}n^{-m_{\alpha }+m_{2\alpha }}
\end{equation*}
for some constant $C^{\prime }>0$. Thus $\mu _{z_{1}}\ast \mu _{z_{2}}\notin
L^{2}$ whenever $m_{\alpha }-m_{2\alpha }=1$, as is the case for these
symmetric spaces.
\end{proof}

The final result of this section is to show that any three-fold convolution
of continuous orbital measures is in $L^{2}$.

\begin{prop}
If $G/K$ is any rank one compact symmetric space and $z_{j}\notin
N_{G}(K) $, $j=1,2,3$, then $\mu _{z_{1}}\ast \mu _{z_{2}}\ast \mu
_{z_{3}}\in L^{2}$.
\end{prop}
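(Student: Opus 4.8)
The plan is to imitate the proof of Theorem~\ref{Th:main} by expanding the $L^2$-norm via the Fourier transform, but now with three factors instead of two. First I would establish the analogue of Proposition~\ref{propFT}: since each $\widehat{\mu_{z_j}}(\pi)$ maps $V_\pi$ into $V_\pi^K = \mathbb{C}X_\pi$ and acts on $X_\pi$ by the scalar $\phi_{\pi}(z_j^{-1}) = \overline{\phi_\pi(z_j)}$, the Hilbert-Schmidt norm of $\widehat{\mu_{z_1}\ast\mu_{z_2}\ast\mu_{z_3}}(\pi)$ is $|\phi_\pi(z_1)\phi_\pi(z_2)\phi_\pi(z_3)|^2$, and so by Peter-Weyl
\begin{equation*}
\left\Vert \mu _{z_{1}}\ast \mu _{z_{2}}\ast \mu_{z_3}\right\Vert _{2}^{2}=\sum_{n}\dim
V_{\pi _{n}}\left\vert \phi _{\pi _{n}}(z_{1})\phi _{\pi_{n}}(z_{2})\phi_{\pi_n}(z_3)\right\vert ^{2}.
\end{equation*}
(This is essentially already in \cite{AGP}; the Lemma and Proposition~\ref{propFT} generalize verbatim to $p$-fold products.)

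Next I would bound each $|\phi_{\pi_n}(z_j)|$ using Corollary~\ref{sizesph}. The worst case is when all three $z_j$ are as non-regular as possible, i.e.\ $z_j \in A\setminus N_G(K)$ but not regular, which can only happen when there are two restricted roots; then $|\phi_{\pi_n}(z_j)| \le C n^{-m_\alpha/2}$ for each $j$. Combined with $\dim V_{\pi_n} \le C n^{m_\alpha + m_{2\alpha}}$ from Proposition~\ref{propdim}, this gives
\begin{equation*}
\left\Vert \mu _{z_{1}}\ast \mu _{z_{2}}\ast \mu_{z_3}\right\Vert _{2}^{2}\leq C\sum_n n^{m_\alpha + m_{2\alpha}} n^{-3m_\alpha} = C\sum_n n^{-2m_\alpha + m_{2\alpha}}.
\end{equation*}
Since $m_\alpha \ge 1 + m_{2\alpha}$ always holds in the rank one spaces with two restricted roots, we get $-2m_\alpha + m_{2\alpha} \le -2 - m_\alpha \le -3 < -1$, so the series converges. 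In all other cases (some $z_j$ regular, or the single-root case) the exponent is at least as negative, so convergence is only easier: e.g.\ in the single-root case $|\phi_{\pi_n}(z_j)| \le C n^{-m_\alpha/2}$ with $\dim V_{\pi_n} \le C n^{m_\alpha}$ gives $\sum_n n^{-2m_\alpha}$, convergent since $m_\alpha \ge 1$ (and here we do \emph{not} need to exclude $SU(2)/SO(2)$, because the extra factor makes the sum converge even when $m_\alpha = 1$).

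The main point to be careful about is organizing the case analysis so that the exponent of $n$ is always strictly below $-1$; unlike the two-fold case there is no exceptional family, so I expect the argument to be short, but I should check each subcase (zero, one, two, or three of the $z_j$ regular, in both the one- and two-root settings) and confirm that the bound $m_\alpha \ge 1 + m_{2\alpha}$ (equivalently, the appendix data) suffices. A clean way to phrase it uniformly: if $k$ of the $z_j$ are regular and $3-k$ are not, the exponent is $(m_\alpha + m_{2\alpha}) - k(m_\alpha + m_{2\alpha}) - (3-k)m_\alpha = -(k-1)(m_\alpha+m_{2\alpha}) - (3-k)m_\alpha$ when there are two roots (and the analogous expression with $m_{2\alpha}=0$ otherwise), which one checks is $\le -2$ in every case. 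No genuine obstacle is expected here — the three-fold product simply has enough decay to spare.
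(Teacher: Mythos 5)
Your proposal is correct and follows essentially the same route as the paper: the Peter--Weyl/spherical-function expansion extended to three factors, the dimension bound of Proposition~\ref{propdim}, and the decay estimates of Corollary~\ref{sizesph}, with the worst (all non-regular) case giving $\sum_n n^{-2m_\alpha+m_{2\alpha}}<\infty$; the paper merely shortcuts the cases with a regular factor by invoking Theorem~\ref{Th:main}(a) and notes that the single-root/$SU(2)/SO(2)$ case can be handled exactly as you do. One minor arithmetic slip: $m_\alpha\ge 1+m_{2\alpha}$ yields $-2m_\alpha+m_{2\alpha}\le -m_\alpha-1\le -3$ (not $\le -2-m_\alpha$, which fails when $m_\alpha=m_{2\alpha}+1$), but the conclusion is unaffected.
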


\begin{proof}
First, assume $G/K$ is not $SU(2)/SO(2)$. By (a) of the previous theorem we
can assume none of $z_{j}$, $j=1,2,3$, are regular. Then, as above, we have
\begin{equation*}
\left\Vert \mu _{z_{1}}\ast \mu _{z_{2}}\ast \mu _{z_{3}}\right\Vert
_{2}^{2}\leq C\sum_{n}n^{m_{\alpha }+m_{2\alpha }}n^{-3m_{\alpha }}\leq
C\sum_{n}n^{-2m_{\alpha }+m_{2\alpha }}
\end{equation*}
and this is finite for all these symmetric spaces.

A proof of this result for $SU(2)/SO(2)$ is given in \cite{AGP}. It can also
be shown by a similar argument to the above, noting that all $z_{j}\notin
N_{SU(2)}(SO(2))$ are regular and using the asymptotic formula from 
Cor.~\ref{sizesph}(a).
\end{proof}

\section{Convolution products in $SU(2)/SO(2)$}

In this section we will show that \textit{no} product of two orbital
measures in $SU(2)/SO(2)$ has an $L^{2}$ density function. We will make use
of the following well known facts.

\begin{lemma}
\label{FS}Consider the trigonometric series for $x\in [ 0,\pi ]:$
\begin{equation*}
\text{ }\sum_{n=1}^{\infty }\frac{\sin nx}{n}\text{ and }\sum_{n=1}^{\infty }
\frac{\cos nx}{n}.
\end{equation*}
The first converges pointwise to the odd, $2\pi $-periodic extension of 
$(\pi -x)/2$. The second converges pointwise to the even, $2\pi $-periodic
extension of $-\log (2\sin (x/2))$ except at $x=0$.
\end{lemma}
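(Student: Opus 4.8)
The plan is to recognize these as the two textbook Fourier expansions --- of the sawtooth and of $-\log(2\sin(x/2))$ --- and to prove them directly; alternatively one could simply cite a standard reference such as Zygmund's \emph{Trigonometric Series} or K\"orner's \emph{Fourier Analysis}, since the paper already flags them as well known.

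First I would treat the sine series. Fix $x\in(0,\pi]$ and write $S_{N}(x)=\sum_{n=1}^{N}\frac{\sin nx}{n}$, so that $S_{N}(\pi)=0$ for every $N$. Differentiating term by term and using the Dirichlet-kernel identity
\begin{equation*}
\sum_{n=1}^{N}\cos nt=\frac{\sin\bigl((N+\tfrac12)t\bigr)}{2\sin(t/2)}-\frac12 ,\qquad t\not\equiv 0 \pmod{2\pi},
\end{equation*}
then integrating from $\pi$ to $x$, yields
\begin{equation*}
S_{N}(x)=\frac{\pi-x}{2}+\int_{\pi}^{x}\frac{\sin\bigl((N+\tfrac12)t\bigr)}{2\sin(t/2)}\,dt .
\end{equation*}
Since $t\mapsto 1/\bigl(2\sin(t/2)\bigr)$ is continuous, hence bounded and integrable, on the compact interval joining $x$ to $\pi$ (which stays away from the zeros of $\sin(t/2)$), the Riemann--Lebesgue lemma drives the remaining integral to $0$ as $N\to\infty$, so $S_{N}(x)\to(\pi-x)/2$. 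As each summand $\sin nx$ is odd and $2\pi$-periodic, the limit function is precisely the odd, $2\pi$-periodic extension of $(\pi-x)/2$ from $[0,\pi]$; at $x=0$ every $S_{N}$ vanishes, matching the value $0$ of that extension.

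Next I would treat the cosine series, again fixing $x\in(0,\pi]$. I would first note that $\sum\frac{\cos nx}{n}$ converges by Dirichlet's test, since the partial sums $\sum_{n=1}^{N}\cos nt$ are bounded uniformly in $N$ (by $\tfrac12+1/|2\sin(t/2)|$, from the identity above) while $1/n$ decreases to $0$. Then, for $0\le r<1$, the power-series identity $\sum_{n\ge1}w^{n}/n=-\log(1-w)$ at $w=re^{ix}$ gives, on taking real parts,
\begin{equation*}
\sum_{n\ge1}\frac{r^{n}\cos nx}{n}=-\frac12\log\bigl(1-2r\cos x+r^{2}\bigr).
\end{equation*}
Letting $r\to 1^{-}$, the right-hand side converges to $-\tfrac12\log(2-2\cos x)=-\log\bigl(2\sin(x/2)\bigr)$ (legitimate since $\sin(x/2)>0$ on $(0,\pi]$), while Abel's limit theorem --- applicable because $\sum\frac{\cos nx}{n}$ converges --- identifies the limit of the left-hand side with that sum. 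Hence $\sum\frac{\cos nx}{n}=-\log\bigl(2\sin(x/2)\bigr)$ on $(0,\pi]$; evenness and $2\pi$-periodicity of the summands extend this to the asserted even, $2\pi$-periodic extension, and at $x=0$ the series is the divergent $\sum 1/n$, consistent with $-\log(2\sin(x/2))\to+\infty$ as $x\to0^{+}$.

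I do not expect a genuine obstacle. The only points needing care are the two interchanges of limits --- the Riemann--Lebesgue lemma for the sine series and Abel's theorem for the cosine series (the latter relying on the separate convergence supplied by Dirichlet's test) --- together with the minor bookkeeping at the endpoints $x=0$ and $x=\pi$.
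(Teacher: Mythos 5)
Your proof is correct. Note that the paper offers no proof of this lemma at all: it is introduced with ``We will make use of the following well known facts'' and simply asserted, so your fallback of citing a standard reference (Zygmund, K\"orner) is exactly the paper's stance, while your written argument is the standard textbook derivation that fills it in. The two halves are both sound: for the sine series, the finite-sum differentiation, the Dirichlet-kernel identity, integration from $\pi$ (where $S_N$ vanishes), and the Riemann--Lebesgue lemma on $[x,\pi]\subset(0,\pi]$ where $1/(2\sin(t/2))$ is bounded; for the cosine series, Dirichlet's test to secure convergence on $(0,\pi]$, the identity $\sum_{n\ge1}r^n\cos(nx)/n=-\tfrac12\log(1-2r\cos x+r^2)$, and Abel's theorem to pass to $r\to1^-$. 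You also handle the only delicate endpoint correctly: at $x=0$ the sine series sums to $0$, which is the value the odd periodic extension must take there (the midpoint of the jump of $(\pi-x)/2$), and the cosine series diverges at $x=0$, matching the stated exception. There is no gap.
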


We note that every double coset of $SU(2)/SO(2)$ contains an element $z\in A$
of the form $
\begin{bmatrix}
e^{i\theta } & 0 \\ 
0 & e^{-i\theta }
\end{bmatrix}$ where $\theta \in $ $[0,\pi /2]$. We will abuse notation and let $z$ also
denote the angle $\theta $. There is only one restricted root, $\alpha ,$ of
multiplicity $m_{\alpha }=1$ and $\alpha (z)=2z$.

\begin{thm}
\label{SU}If $z_{1},z_{2}\in SU(2)$, then $\mu _{z_{1}}\ast \mu
_{z_{2}}\notin L^{2}$.
\end{thm}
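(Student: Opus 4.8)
The plan is to use the Fourier-analytic formula of Proposition~\ref{propFT} together with the explicit description of the spherical functions of $SU(2)/SO(2)$. First I would recall that for $SU(2)/SO(2)$ there is a single restricted root $\alpha$ with $m_\alpha=1$ and $m_{2\alpha}=0$, so in the notation of Proposition~\ref{main} we have $a=b=0$, hence $P_n^{(0,0)}$ is the Legendre polynomial and $\phi_{\pi_n}(z)=P_n(\cos 2z)$. Moreover by Proposition~\ref{propdim} the dimension grows linearly: $\dim V_{\pi_n}\asymp n$ (indeed $\dim V_{\pi_n}=2n+1$). Plugging into Proposition~\ref{propFT} gives
\begin{equation*}
\left\Vert \mu_{z_1}\ast\mu_{z_2}\right\Vert_2^2 \;=\;\sum_{n\ge 1}\dim V_{\pi_n}\,\bigl|P_n(\cos 2z_1)P_n(\cos 2z_2)\bigr|^2 \;\asymp\; \sum_{n\ge 1} n\,\bigl|P_n(\cos 2z_1)\bigr|^2\bigl|P_n(\cos 2z_2)\bigr|^2 .
\end{equation*}
The goal is then to show this series diverges for every choice of $z_1,z_2\in[0,\pi/2]$.

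Next I would split into cases according to whether the arguments $\cos 2z_j$ are at the endpoints $\pm1$ (i.e.\ $z_j\in N_{SU(2)}(SO(2))$, so $\mu_{z_j}$ is atomic) or in the open interval. If some $z_j\in N_{SU(2)}(SO(2))$ then $\mu_{z_j}$ is a sum of point masses and $\mu_{z_1}\ast\mu_{z_2}$ is supported on a finite union of double cosets, which has Haar measure zero, so it is not even in $L^1$; alternatively, $P_n(\pm1)=(\pm1)^n$, so the general term is $\asymp n|P_n(\cos 2z_{3-j})|^2$, and one checks this diverges. The main case is $z_1,z_2\in(0,\pi/2)$, both regular. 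Here I use the asymptotic from the lemma preceding Corollary~\ref{sizesph}: with $a=b=0$,
\begin{equation*}
P_n(\cos\theta)\;=\;k(\theta)\,n^{-1/2}\cos\!\bigl(N\theta+\gamma\bigr)+O(n^{-3/2}),\qquad N=n+\tfrac12,\ \gamma=-\tfrac\pi4,\ k(\theta)=\sqrt{\tfrac{2}{\pi\sin\theta}} .
\end{equation*}
Writing $\theta_j=2z_j$, the general term of the series is therefore
\begin{equation*}
n\cdot\frac{k(\theta_1)^2k(\theta_2)^2}{n^2}\,\cos^2\!\bigl(N\theta_1+\gamma\bigr)\cos^2\!\bigl(N\theta_2+\gamma\bigr)+O(n^{-1}) \;=\;\frac{c}{n}\,\cos^2\!\bigl(N\theta_1+\gamma\bigr)\cos^2\!\bigl(N\theta_2+\gamma\bigr)+O(n^{-1})
\end{equation*}
with $c=k(\theta_1)^2k(\theta_2)^2>0$. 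So it suffices to show $\sum_n \tfrac1n\cos^2(N\theta_1+\gamma)\cos^2(N\theta_2+\gamma)=\infty$. I would expand the product of squared cosines using $\cos^2 u=\tfrac12(1+\cos 2u)$ and the product-to-sum identity, obtaining a constant term $\tfrac14\sum_n\tfrac1n$ (which diverges) plus finitely many series of the form $\sum_n \tfrac1n\cos(n\psi+\delta)$ with $\psi\in\{2\theta_1,2\theta_2,2\theta_1+2\theta_2,2\theta_1-2\theta_2\}$ (and corresponding phases). By Lemma~\ref{FS} each such series converges (the $\cos n\psi/n$ and $\sin n\psi/n$ series converge pointwise for every $\psi$, and the $O(n^{-1})$ error terms must be handled by noting they are absolutely... no: I would instead keep the error in the form $n\cdot O(n^{-3/2})\cdot O(n^{-1/2})=O(n^{-1})$, which is not summable, so I must be careful and instead write $P_n(\cos\theta_j)^2 = k(\theta_j)^2 n^{-1}\cos^2(N\theta_j+\gamma)+O(n^{-2})$, so the cross terms in the product are $O(n^{-3/2})$ and, after multiplying by $n$, contribute $O(n^{-1/2})$ — still not obviously summable).

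The main obstacle is precisely this bookkeeping of error terms: a naive term-by-term estimate loses too much. The cleanest fix is to work with the exact Fourier/Gegenbauer expansion of $P_n(\cos\theta)^2$ rather than the asymptotic, or to invoke the classical Mehler–Dirichlet–type bound $|P_n(\cos\theta)|\le C(n\sin\theta)^{-1/2}$ to dominate the error contributions and a matching lower bound on a positive-density set of $n$. Concretely, I would argue: the diverging constant term $\tfrac{c}{4}\sum_n\tfrac1n$ comes from the genuine main term, and all remaining contributions — both the oscillatory main-term pieces and the cross/error pieces — form series whose partial sums are bounded, the oscillatory pieces by summation by parts (Dirichlet test, using $\sum_{n\le N}\cos(n\psi+\delta)=O(1)$ for fixed $\psi\ne 0\bmod 2\pi$, exactly the content behind Lemma~\ref{FS}) and the error pieces by absolute convergence once one has the sharp $O(n^{-1})$ bound on $P_n(\cos\theta_1)^2P_n(\cos\theta_2)^2 - k(\theta_1)^2k(\theta_2)^2 n^{-2}\cos^2(N\theta_1+\gamma)\cos^2(N\theta_2+\gamma)$ being $O(n^{-3/2})$ uniformly. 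Hence the full series is (divergent constant) $+$ (bounded), so it diverges, giving $\mu_{z_1}\ast\mu_{z_2}\notin L^2$. I would need to double-check the edge case $\theta_1=\theta_2$ (where $\psi=2\theta_1-2\theta_2=0$ produces a second diverging term, only reinforcing the conclusion) and the case $2\theta_1\pm2\theta_2\equiv 0\bmod 2\pi$ (possible since $\theta_j\in(0,\pi)$), which likewise only adds further divergence. This completes the argument.
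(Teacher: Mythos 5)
Your proposal follows essentially the same route as the paper's proof: the Plancherel identity of Proposition~\ref{propFT}, the linear growth $\dim V_{\pi_n}\asymp n$, the Jacobi asymptotic $P_n^{(0,0)}(\cos\theta)=k(\theta)n^{-1/2}\cos(N\theta+\gamma)+O(n^{-3/2})$, and Lemma~\ref{FS} to dispose of the oscillatory pieces, leaving a divergent harmonic main term. Two corrections to your own assessment of it. First, the ``main obstacle'' you identify is not actually there: with the expansion you yourself write, $P_n(\cos\theta_j)^2=k(\theta_j)^2n^{-1}\cos^2(N\theta_j+\gamma)+O(n^{-2})$, the cross terms in the product of the two squares are $O(n^{-1})\cdot O(n^{-2})=O(n^{-3})$, not $O(n^{-3/2})$, so after multiplying by $\dim V_{\pi_n}\asymp n$ every error contribution is $O(n^{-2})$ and absolutely summable; no Mehler--Dirichlet bound or exact expansion is needed, and this naive bookkeeping is exactly what the paper does. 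Second, the degenerate angles you defer to a ``double-check'' are precisely where the paper's proof does its only real casework (its Cases 1 and 2, including the point $z_1=z_2=\pi/4$): one must verify that when $2(\theta_1\pm\theta_2)\equiv 0\bmod 2\pi$ the resulting non-oscillating term does not cancel the constant $\tfrac14\sum_n 1/n$. In your $\cos^2$ formulation this is a short phase computation: for $\theta_1=\theta_2$ the extra term is $+\tfrac18\sum_n 1/n$, and for $\theta_1+\theta_2=\pi$ one has $\cos(2N(\theta_1+\theta_2)+4\gamma)=\cos(2n\pi)=1$ (using $N=n+\tfrac12$, $\gamma=-\tfrac\pi4$), again giving $+\tfrac18\sum_n 1/n$; so your claim that these cases ``only add further divergence'' is true, but the proof is incomplete until this sign check is actually carried out.
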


\begin{proof}
Throughout the proof $C$ will denote a (strictly) positive constant that can
change from line to another.

Without loss of generality we can assume $z_{j}\in A$. Furthermore, we can
assume both (the angles) $z_{j}\in (0,\pi /2)$ because if $z_{j}=0$ or $\pi
/2$, then $\alpha (z_{j})=2z_{j}=0$ mod$\pi $ so $z_{j}\in N_{G}(K)$ and in
this case even $\mu _{z_{1}}\ast \mu _{z_{2}}\notin L^{1}$.

The spherical representation $\pi _{n}$ of $SU(n)$, with highest weight
restricted to $\mathfrak{a}$ equal to $n\alpha $ is known to have dimension 
$2n+1$ (c.f. \cite{AGP} or \cite[p. 322]{He2}). From Prop.~\ref{main}
we have 
\begin{equation*}
\phi _{\pi _{n}}(z)=P_{n}^{(0,0)}(\cos \alpha (z))=P_{n}^{(0,0)}(\cos 2z).
\end{equation*}
The asymptotic estimates for Jacobi polynomials give
\begin{equation*}
\phi _{\pi _{n}}(z)=Cn^{-1/2}\left( \cos ((n+1/2)2z-\pi /4)+O(n^{-1})\right)
.
\end{equation*}
Squaring gives 
\begin{align*}
\left( \phi _{\pi _{n}}(z)\right) ^{2} &=Cn^{-1}\left( \cos
^{2}((2n+1)z-\pi /4)+O(n^{-1}\right)  \\
&=Cn^{-1}\left( \frac{\cos (2z(2n+1)-\pi /2)+1}{2}+O(n^{-1})\right) \\
&=Cn^{-1}\left( \sin (2z(2n+1))+1+O(n^{-1})\right) .
\end{align*}
Hence
\begin{equation*}
\left\Vert \mu _{z_{1}}\ast \mu _{z_{2}}\right\Vert _{2}^{2}=\sum_{n}\mathrm{
\dim }V_{\pi _{n}}\left( \phi _{\pi _{2n}}(z_{1})\right) ^{2}\left( \phi
_{\pi _{2m}}(z_{2})\right) ^{2}
\end{equation*}
\begin{align*}
&=\sum_{n}C\frac{(2n+1)}{n^{2}}\left( \sin
(2z_{1}(2n+1))+1+O(n^{-1})\right) \left( \sin
(2z_{2}(2n+1))+1+O(n^{-1})\right) \\
&=\sum_{n}\frac{C}{n}\left( \left( \sin (2z_{1}(2n+1))+1\right) \left( \sin
(2z_{2}(2n+1))+1\right) +O(n^{-1})\right) .
\end{align*}

We claim this sum diverges. Of course, the convergence or divergence of the
sum depends only on the convergence or divergence of 
\begin{equation*}
\sum_{n}\frac{1}{n}\left( \sin (2z_{1}(2n+1))+1\right) (\sin
(2z_{2}(2n+1))+1)
\end{equation*}
and therefore upon the sum 
\begin{equation}
\sum_{n}\frac{1}{n}\left(\sin (2z_{1}(2n+1))\sin (2z_{2}(2n+1))+\sin
(2z_{1}(2n+1))+\sin (2z_{2}(2n+1))+1\right).  \label{sum}
\end{equation}
As $\sin ((2n+1)\theta )=\sin 2n\theta \cos \theta +\sin \theta \cos
2n\theta $, Lemma~\ref{FS} implies 
\begin{equation*}
\sum_{n}\frac{\sin ((2n+1)\theta )}{n}
\end{equation*}
converges for any $\theta =2z_{1},2z_{2}$ as $4z_{j}\neq 0$ mod$2\pi $. Thus
\eqref{sum} converges if and only if 
\begin{equation}
\sum_{n}\frac{1}{n}(\sin (2z_{1}(2n+1))\sin (2z_{2}(2n+1))+1)<\infty .
\label{sum1}
\end{equation}

Another application of basic trigonometric identities shows there are
scalars $c_{j}=c_{j}(z_{1},z_{2})$ such that 
\begin{align*}
\sin (2z_{1}(2n+1))\sin (2z_{2}(2n+1)) &=\cos (4n(z_{1}-z_{2}))c_{1}-\sin
(4n(z_{1}-z_{2}))c_{2} \\
&-\cos (4n(z_{1}+z_{2}))c_{3}+\sin (4n(z_{1}+z_{2}))c_{4}.
\end{align*}
It follows from the lemma above that
\begin{equation*}
\sum_{n}\frac{1}{n}\sin (2z_{1}(2n+1))\sin (2z_{2}(2n+1))
\end{equation*}
converges if $4(z_{1}\pm z_{2})\neq 0$ mod$2\pi $. Of course, in this case 
\eqref{sum1} diverges.

It remains to consider the two possibilities $z_1 \pm z_2 \equiv 0$ mod$(\pi/2)$.

Case 1: $z_{1}-z_{2}\equiv 0$ mod$\pi /2$. As $z_{1},z_{2}\in (0,\pi /2)$,
this can only happen if $z_{1}=z_{2}$. Then
\begin{equation*}
1+\sin (2z_{1}(2n+1))\sin (2z_{2}(2n+1))=1+\sin ^{2}(2z_{1}(2n+1))
\end{equation*}
\begin{equation*}
=\frac{1}{2}(3-(\cos 8nz_{1}\cos 4z_{1}-\sin 8nz_{1}\sin 4z_{1})).
\end{equation*}
If $z_{1}\neq \pi /4$, then $\sum \left( \cos 8nz_{1}\right) /n$ converges
and hence \eqref{sum1} diverges. If $z_{1}=z_{2}=\pi /4$, then direct
substitution shows \eqref{sum1} diverges.

Case 2: $z_{1}+z_{2}\equiv 0$ mod$\pi /2$. Then $z_{1}+z_{2}=\pi /2$. In
this case, $\sin (2z_{2}(2n+1))=\sin (2z_{1}(2n+1))$ and hence the arguments
are the same.
\end{proof}

To conclude, we summarize our results. We will say $G/K$ satisfies the 
$L^{1}\longleftrightarrow L^{2}$ dichotomy if $\mu _{x}\ast \mu _{y}\in L^{1}$
implies $\mu _{x}\ast \mu _{y}\in L^{2}$. The following is an immediate consequence of Theorems~\ref{Th:main} and \ref{SU}.

\begin{cor}
The rank one symmetric space $G/K$ satisfies the $L^{1}\longleftrightarrow
L^{2}$ dichotomy if and only if $m_{\alpha }-m_{2\alpha }>1$. More
specifically:

{\rm (i)} When $G/K = SU(2)/SO(2)$ ($m_{\alpha }=1$, $m_{2\alpha }=0$), then 
$\mu _{x}\ast \mu _{y}$ $\notin L^{2}$ for any $x,y$.

{\rm (ii)} If $G/K$ is type $AIII$ or $CII$ with $q=2,$ or type $FII$, {\rm (}the other
symmetric spaces with $m_{\alpha }=m_{2\alpha }+1${\rm )} and $\mu _{x}\ast \mu
_{y}\in L^{1}$, then $\mu _{x}\ast \mu _{y}\notin L^{2}$ if and only if neither 
$x$ nor $y$ is regular.
\end{cor}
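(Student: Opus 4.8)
The plan is to assemble the statement from Corollary~\ref{Cor:abscont}, Theorem~\ref{Th:main}, Theorem~\ref{SU}, and the list of rank one spaces in the appendix, reorganizing the case analysis around the single invariant $m_{\alpha}-m_{2\alpha}$. Throughout I replace $x$ and $y$ by the elements of $A$ in their double cosets, which is harmless since $\mu_{x}$ depends only on $KxK$ and since regularity is a property of the double coset. I first record that, by the appendix, $m_{\alpha}\ge m_{2\alpha}+1$ always, and that the rank one spaces with $m_{\alpha}-m_{2\alpha}=1$ are exactly $SU(2)/SO(2)$ (the unique one-root rank one space with $m_{\alpha}=1$, $m_{2\alpha}=0$) together with the two-root spaces $AIII$, $CII$ with $q=2$, and $FII$ (those with $m_{\alpha}=m_{2\alpha}+1$), as noted in the Remark following Theorem~\ref{Th:main}. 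Thus ``$m_{\alpha}-m_{2\alpha}>1$'' is equivalent to ``$G/K$ is neither $SU(2)/SO(2)$ nor one of the $AIII$, $CII$ ($q=2$), $FII$ spaces.''

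For the sufficiency (if $m_{\alpha}-m_{2\alpha}>1$ then the dichotomy holds): suppose $\mu_{x}\ast\mu_{y}\in L^{1}$. By Corollary~\ref{Cor:abscont} both measures are then continuous, i.e.\ $x,y\in A\setminus N_{G}(K)$; since $m_{\alpha}-m_{2\alpha}\ge 2$ rules out $SU(2)/SO(2)$ and the $AIII$, $CII$ ($q=2$), $FII$ spaces, Theorem~\ref{Th:main}(b) applies and gives $\mu_{x}\ast\mu_{y}\in L^{2}$.

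For the necessity I argue contrapositively: if $m_{\alpha}-m_{2\alpha}=1$ (the only alternative, as $m_{\alpha}\ge m_{2\alpha}+1$), I exhibit a pair with $\mu_{x}\ast\mu_{y}\in L^{1}\setminus L^{2}$. If $G/K=SU(2)/SO(2)$, take $x,y\in A$ with angles in $(0,\pi/2)$, so $x,y\notin N_{G}(K)$; then $\mu_{x}\ast\mu_{y}\in L^{1}$ by Theorem~\ref{ac} while $\mu_{x}\ast\mu_{y}\notin L^{2}$ by Theorem~\ref{SU}, which simultaneously establishes statement~(i). If $G/K$ is one of the two-root spaces with $m_{\alpha}=m_{2\alpha}+1$, choose $Z\in\mathfrak{a}$ with $2\alpha(Z)\equiv 0$ and $\alpha(Z)\not\equiv 0$ mod $\pi$, and set $z_{1}=z_{2}=e^{iZ}$; these are continuous but not regular, so $\mu_{z_{1}}\ast\mu_{z_{2}}\in L^{1}$ by Theorem~\ref{ac} while $\mu_{z_{1}}\ast\mu_{z_{2}}\notin L^{2}$ by Theorem~\ref{Th:main}(c).

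Finally, statement~(ii): if $G/K$ is one of $AIII$, $CII$ ($q=2$), $FII$ and $\mu_{x}\ast\mu_{y}\in L^{1}$, then Corollary~\ref{Cor:abscont} again forces $x,y\in A\setminus N_{G}(K)$, and Theorem~\ref{Th:main}(a) gives $\mu_{x}\ast\mu_{y}\in L^{2}$ when at least one of $x,y$ is regular, whereas Theorem~\ref{Th:main}(c) gives $\mu_{x}\ast\mu_{y}\notin L^{2}$ when neither is. There is no real analytic obstacle here; the only points requiring care are verifying that the classification by $m_{\alpha}-m_{2\alpha}$ is exhaustive and matches the named types (which is read off from the appendix) and checking that each ``bad'' space genuinely contains an admissible pair --- continuous, and additionally non-regular in the two-root case --- which is exactly where the existence of a $Z\in\mathfrak{a}$ with $2\alpha(Z)\equiv 0$ and $\alpha(Z)\not\equiv 0$ mod $\pi$ is used.
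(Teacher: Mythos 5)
Your proposal is correct and follows essentially the same route as the paper, which simply records the corollary as an immediate consequence of Theorems~\ref{Th:main} and \ref{SU}; your extra steps (invoking Corollary~\ref{Cor:abscont} and Theorem~\ref{ac} for the $L^{1}$ side, checking via the appendix that $m_{\alpha}-m_{2\alpha}=1$ picks out exactly $SU(2)/SO(2)$, $AIII$ and $CII$ with $q=2$, and $FII$, and exhibiting a continuous non-regular $z=e^{iZ}$ with $2\alpha(Z)\equiv 0$, $\alpha(Z)\not\equiv 0$ mod $\pi$) just make explicit what the paper leaves implicit.
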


\begin{cor}
If $G/K$ is a rank one symmetric space, then $\mu _{z_{1}}\ast \mu
_{z_{2}}\in L^{2}$ if and only if $\dim Kz_{1}K+\dim Kz_{2}K>G/K$.
\end{cor}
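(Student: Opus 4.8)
The plan is to combine Corollary~\ref{Cor:abscont} with Theorems~\ref{Th:main} and~\ref{SU} by a case analysis on the rank one symmetric space, translating the $L^2$ criterion into the dimension inequality $\dim Kz_1K + \dim Kz_2K > \dim G/K$. Recall from Remark~\ref{Rem} and the discussion in Section~2 that for $z \notin N_G(K)$ one has $\dim KzK = m_\alpha + m_{2\alpha}$ if $z$ is regular and $\dim KzK = m_\alpha$ if $z$ is continuous but not regular (and of course $\dim G/K = m_\alpha + m_{2\alpha} + 1$), while $\dim KzK = 0$ if $z \in N_G(K)$. So the inequality $\dim Kz_1K + \dim Kz_2K > \dim G/K$ already forces both $z_j \notin N_G(K)$, and in fact forces strict inequality in~\eqref{criteria}.

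First I would dispose of the degenerate direction: if either $\mu_{z_j}$ is not continuous, then $\mu_{z_1} \ast \mu_{z_2}$ is not even in $L^1$ (it is supported on a set of Haar measure zero, as noted in Section~2), hence not in $L^2$, and simultaneously $\dim Kz_jK = 0$ makes the dimension inequality fail; so we may assume both $z_j \notin N_G(K)$. Next, the case $G/K = SU(2)/SO(2)$: here $m_\alpha = 1$, $m_{2\alpha} = 0$, so $\dim G/K = 2$ and each continuous $z_j$ is regular with $\dim Kz_jK = 1$; thus $\dim Kz_1K + \dim Kz_2K = 2 = \dim G/K$, so the strict inequality never holds, matching Theorem~\ref{SU} which says $\mu_{z_1} \ast \mu_{z_2} \notin L^2$ always. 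For all other rank one spaces I would split into the three subcases of Theorem~\ref{Th:main}. If $G/K$ is not of type $AIII$ or $CII$ with $q=2$ nor type $FII$, then $m_\alpha - m_{2\alpha} \geq 2$, so even when neither $z_j$ is regular we get $\dim Kz_1K + \dim Kz_2K = 2m_\alpha \geq m_\alpha + m_{2\alpha} + 2 > \dim G/K$; and if at least one $z_j$ is regular the sum is only larger. By Theorem~\ref{Th:main}(a),(b), $\mu_{z_1} \ast \mu_{z_2} \in L^2$ in every such configuration, so both sides of the equivalence hold. Finally, if $G/K$ is of type $AIII$ or $CII$ with $q=2$, or type $FII$, then $m_\alpha = m_{2\alpha} + 1$: if at least one $z_j$ is regular, $\dim Kz_1K + \dim Kz_2K \geq (m_\alpha + m_{2\alpha}) + m_\alpha = 2m_\alpha + m_{2\alpha} > m_\alpha + m_{2\alpha} + 1 = \dim G/K$ (using $m_\alpha \geq 2$ here), matching Theorem~\ref{Th:main}(a) which gives $\mu_{z_1} \ast \mu_{z_2} \in L^2$; while if neither is regular, $\dim Kz_1K + \dim Kz_2K = 2m_\alpha = m_\alpha + m_{2\alpha} + 1 = \dim G/K$, so the strict inequality fails, matching Theorem~\ref{Th:main}(c) which gives $\mu_{z_1} \ast \mu_{z_2} \notin L^2$.

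There is really no serious obstacle here; the only thing to be careful about is the bookkeeping of which of the standard facts ($m_\alpha \geq 1 + m_{2\alpha}$ always, $m_\alpha + m_{2\alpha} \geq 2$ except for $SU(2)/SO(2)$, and $m_\alpha \geq 2$ whenever there are two restricted roots) are needed in each subcase to convert the $L^2$ verdict into the dimension inequality and back. One could also phrase the whole argument more slickly by first proving the purely arithmetic statement that, for $z_1, z_2 \notin N_G(K)$ in a rank one space, $\dim Kz_1K + \dim Kz_2K > \dim G/K$ holds exactly when the space is not $SU(2)/SO(2)$ and, in the three exceptional families, at least one $z_j$ is regular — after which the corollary is a one-line citation of Theorems~\ref{Th:main} and~\ref{SU}. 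I would present it in this streamlined form.
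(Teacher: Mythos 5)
Your proposal is correct and follows essentially the same route as the paper: a case analysis on $SU(2)/SO(2)$, the spaces with $m_{\alpha}-m_{2\alpha}\geq 2$, and the exceptional families with $m_{\alpha}=m_{2\alpha}+1$, translating the $L^{2}$ verdicts of Theorems~\ref{Th:main} and~\ref{SU} into the dimension inequality via $\dim KzK\in\{0,\,m_{\alpha},\,m_{\alpha}+m_{2\alpha}\}$ and $\dim G/K=m_{\alpha}+m_{2\alpha}+1$. The bookkeeping in each subcase matches the paper's, so no changes are needed.
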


\begin{proof}
First, suppose $G/K$ is not type $AIII$ or $CII$ with $q=2$, type $FII$ or
(isomorphic to) $SU(2)/SO(2)$. Then Theorem~\ref{Th:main} says $\mu
_{z_{1}}\ast \mu _{z_{2}}\in L^{2}$ if and only if $\mu _{z_{1}},\mu
_{z_{2}} $ are continuous. In this case, $\dim Kz_{j}K\geq m_{\alpha }$ and as 
$m_{\alpha }\geq 2+m_{2\alpha }$ for these symmetric spaces, it follows that $\dim
Kz_{1}K+\dim Kz_{2}K\geq m_{\alpha }+2+m_{2\alpha }>\dim G/K$. 

If, instead,
$\dim Kz_{1}K+\dim Kz_{2}K\leq \dim G/K$, then $\dim Kz_{j}K<m_{\alpha }$ for
some $j$. But that means $\mu _{z_{j}}$ is not continuous, hence $\mu
_{z_{1}}\ast \mu _{z_{2}}$ is not even in $L^{1}$.

If $G/K$ is type $AIII$ or $CII$ with $q=2$ or type $FII,$ then $\mu
_{z_{1}}\ast \mu _{z_{2}}\in L^{2}$ if and only if both $\mu _{z_{1}},\mu
_{z_{2}}$ are continuous and at least one is regular. But then $\dim
Kz_{1}K+\dim Kz_{2}K\geq $ $m_{\alpha }+m_{\alpha }+m_{2\alpha }>\dim G/K$
as $m_{\alpha }\geq 2$. On the other hand, if neither $\mu _{z_{1}}$ or $\mu
_{z_{2}}$ is regular, then $\dim Kz_{1}K+\dim Kz_{2}K\leq 2m_{\alpha
}=m_{\alpha }+1+m_{2\alpha }=\dim G/K.$

Finally, if $G/K$ is isomorphic to $SU(2)/SO(2)$, then $\mu _{z_{1}}\ast \mu _{z_{2}}\notin
L^{2}$ for any $z_{1},z_{2}$, while $\dim Kz_{1}K+\dim Kz_{2}K\leq 2=\dim
G/K $ for all $z_{1},z_{2}$.
\end{proof}

{\bf{Acknowledgement}}: We thank F. Ricci for helpful conversations.

\section{Appendix}

We list here the families of compact symmetric spaces of rank one, along
with the multiplicities of $\alpha ,2\alpha $ and restricted root system 
$\Phi ^{+}$. These facts can be found in \cite[Ch. X]{He1}.
\begin{table}[h]
\centering
\begin{tabular}{ccccc}
\toprule
\text{Type} & $G/K$ & $\Phi ^{+}$ & $m_{\alpha }$ & $m_{2\alpha }$ \\ 
\midrule
$AI$ & $SU(2)/SO(2)$ & $A_{1}$ & $1$ & $-$ \\ 
$AII$ & $SU(4)/Sp(4)$ & $A_{1}$ & $4$ & $-$ \\ 
$AIII$ & 
$\begin{array}{c}
SU(q+1)/S(U(q)\times U(1)) \\ 
q>1
\end{array}$
& $BC_{1}$ & $2(q-1)$ & $1$ \\ 
$BII$ & 
$\begin{array}{c}
SO(q+1)/S(O(q)\times O(1)) \\ 
q>2
\end{array}$
& $A_{1}$ & $q-1$ & $-$ \\ 
$CII$ & 
$\begin{array}{c}
Sp(2q+2)/Sp(2q)\times Sp(2) \\ 
q>1
\end{array}$
& $BC_{1}$ & $4(q-1)$ & $3$ \\ 
$FII$ & $F_{4}/SO(9)$ & $BC_{1}$ & $8$ & $7$ \\
\bottomrule
\end{tabular}
\end{table}
We have excluded $BII$ with $q=2$ as this is isomorphic to $SU(2)/SO(2)$.
Similarly, the only simple, rank one symmetric space of type $DIII$ is
isomorphic to $SU(4)/S(U(3)\times U(1))$, i.e., type $AIII$ with $q=3$.

\end{document}